\newtheorem{theorem}{Theorem}[section]
\newtheorem{lemma}[theorem]{Lemma}
\newtheorem{proposition}[theorem]{Proposition}
\newtheorem{definition}[theorem]{Definition}
 \DeclareMathOperator{\gr}{gr}
 \DeclareMathOperator{\Ann}{Ann}
\DeclareMathOperator{\Aut}{Aut}
\DeclareMathOperator{\Orb}{Orb}
\DeclareMathOperator{\Hom}{Hom}
\begin{document}

\title[Central extensions of filiform associative algebras]
{Central extensions of filiform associative algebras}

\author{Iqboljon Karimjanov}
\address{[Iqboljon Karimjanov] Department of Matem\'aticas, Institute of Matem\'aticas, University of Santiago de Compostela, 15782, Spain.}
\email{iqboli@gmail.com}

\author{Ivan   Kaygorodov}
\address{[Ivan   Kaygorodov] CMCC, Universidade Federal do ABC. Santo Andr\'e, Brasil.}
\email{kaygorodov.ivan@gmail.com}

\author{Manuel Ladra}
\address{[Manuel Ladra] Department of Matem\'aticas, Institute of Matem\'aticas, University of Santiago de Compostela, 15782, Spain.}
\email{manuel.ladra@usc.es}
\thanks{This work was supported by Agencia Estatal de Investigaci\'on (Spain), grant MTM2016-79661-P (European FEDER support included, UE);  RFBR 17-01-00258; FAPESP 17/15437-6.}

\begin{abstract}In this paper we describe central extensions (up to isomorphism) of all complex null-filiform and filiform associative algebras.
\end{abstract}

\subjclass[2010]{16D70, 16S70}
\keywords{associative algebras, nilpotent, null-filiform, naturally graded, filiform, quasi-filiform, $2$-cocycles
central extension.}

\maketitle

\section*{Introduction}
Central extensions play an important role in quantum mechanics: one of the earlier
encounters is by means of Wigner’s theorem which states that a symmetry of a quantum
mechanical system determines an (anti-)unitary transformation of a Hilbert space.

Another area of physics where one encounters central extensions is the quantum theory
of conserved currents of a Lagrangian. These currents span an algebra which is closely
related to the so-called affine Kac–Moody algebras, which are the universal central extension
of loop algebras.

Central extensions are needed in physics, because the symmetry group of a quantized
system usually is a central extension of the classical symmetry group, and in the same way
the corresponding symmetry Lie algebra of the quantum system is, in general, a central
extension of the classical symmetry algebra. Kac–Moody algebras have been conjectured
to be a symmetry groups of a unified superstring theory. The centrally extended Lie
algebras play a dominant role in quantum field theory, particularly in conformal field
theory, string theory and in M-theory.

In the theory of Lie groups, Lie algebras and their representations, a Lie algebra extension
is an enlargement of a given Lie algebra $g$ by another Lie algebra $h$. Extensions
arise in several ways. There is a trivial extension obtained by taking a direct sum of
two Lie algebras. Other types are split extension and central extension. Extensions may
arise naturally, for instance, when forming a Lie algebra from projective group representations.
A central extension and an extension by a derivation of a polynomial loop algebra
over a finite-dimensional simple Lie algebra give a Lie algebra which is isomorphic with a
non-twisted affine Kac–Moody algebra \cite[Chapter 19]{bkk}. Using the centrally extended loop
algebra one may construct a current algebra in two spacetime dimensions. The Virasoro
algebra is the universal central extension of the Witt algebra, the Heisenberg algebra is
a central extension of a commutative Lie algebra  \cite[Chapter 18]{bkk}.

The algebraic study of central extensions of Lie and non-Lie algebras has a very big story \cite{omirov,zusmanovich,is11,ha17,hac16,ss78}.
So, Skjelbred and Sund used central extensions of Lie algebras for a classification of nilpotent Lie algebras  \cite{ss78}.
After that, using the method described by Skjelbred and Sund were
described  all central extensions of
 $4$-dimensional Malcev algebras \cite{hac16},
 $3$-dimensional Jordan algebras \cite{ha17},
 $3$-dimensional Zinbiel algebras \cite{ack18},
 $3$-dimensional anticommutative algebras \cite{cfk182},
 $2$-dimensional algebras \cite{cfk18}.
Note that, the method of central extensions is an important tool in the classification of nilpotent algebras
(see for example, \cite{ha16n}).
Using this method, were described
all $4$-dimensional nilpotent associative algebras \cite{degr1},
all $5$-dimensional nilpotent Jordan algebras \cite{ha16},
all $5$-dimensional nilpotent restricted Lie algebras \cite{usefi1},
all $6$-dimensional nilpotent Lie algebras \cite{degr3,degr2},
all $6$-dimensional nilpotent Malcev algebras \cite{hac18}
and some other.

The main results of this paper consist of the classification of central extensions of
null-filiform and filiform complex associative algebras.

This paper is organized as follows. In Section~\ref{S:prel}  we present some basic concepts needed
for this study and give the classification of central extensions of
null-filiform  complex associative algebras. In Section~\ref{S:fil} we  provide the classification of central extensions of
filiform  complex associative algebras and finish with an appendix given the list of obtained main associative algebras.

\section{Preliminaries}\label{S:prel}
\subsection{Null-filiform and filiform associative algebras}

For an algebra $\bf {A}$ of an arbitrary variety, we consider the series
\[
{\bf A}^1={\bf A}, \qquad \ {\bf A}^{i+1}=\sum\limits_{k=1}^{i}{\bf A}^k {\bf A}^{i+1-k}, \qquad i\geq 1.
\]

We say that  an  algebra $\bf A$ is \emph{nilpotent} if ${\bf  A}^{i}=0$ for some $i \in \mathbb{N}$. The smallest integer satisfying ${\bf A}^{i}=0$ is called the  \emph{index of nilpotency} of $\bf A$.

\begin{definition}
An $n$-dimensional algebra $\bf A$ is called null-filiform if $\dim {\bf A}^i=(n+ 1)-i,\ 1\leq i\leq n+1$.
\end{definition}

It is easy to see that an algebra has a maximum nilpotency index if and only if it is null-filiform. For a nilpotent algebra, the condition of null-filiformity is equivalent to the condition that the algebra is one-generated.

All null-filiform associative algebras were described in  \cite{MO,karel}:

\begin{theorem} An arbitrary $n$-dimensional null-filiform associative algebra is isomorphic to the algebra:
\[\mu_0^n : \quad e_i e_j= e_{i+j}, \quad 2\leq i+j\leq n,\]
where $\{ e_1, e_2, \dots, e_n\}$ is a basis of the algebra $\mu_0^n$.
\end{theorem}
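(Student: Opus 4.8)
The plan is to build the stated basis out of a single generator and then just read off the products. Write ${\bf A}$ for the given $n$-dimensional null-filiform associative algebra. Since $\dim {\bf A}^2 = n-1$, the space ${\bf A}/{\bf A}^2$ is one-dimensional, so I fix some $e_1\in {\bf A}\setminus {\bf A}^2$. By the observation made just before the statement, ${\bf A}$ is one-generated, and $e_1$ is a generator; hence, using associativity (every product of copies of $e_1$ equals a power $e_1^k$), the set $\{e_1^k : k\geq 1\}$ spans ${\bf A}$, with $e_1^k\in {\bf A}^k$ for each $k$. If one wishes to avoid citing the one-generated property, it follows from a Nakayama-type argument: ${\bf A}=\langle e_1\rangle+{\bf A}^2$ gives ${\bf A}^2\subseteq \langle e_1\rangle+{\bf A}^3$, hence ${\bf A}\subseteq \langle e_1\rangle+{\bf A}^k$ for all $k$, and ${\bf A}^{n+1}=0$ (which holds since $\dim {\bf A}^{n+1}=0$) closes the induction.

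Next I would fix the number of relevant powers. From $e_1^k\in {\bf A}^k$ and ${\bf A}^{n+1}=0$ we get $e_1^k=0$ for $k\geq n+1$, so ${\bf A}$ is spanned by $e_1,e_1^2,\dots,e_1^n$. Let $m$ be the smallest index with $e_1^m=0$; then $e_1^k=0$ for all $k\geq m$, so already $e_1,\dots,e_1^{m-1}$ span ${\bf A}$. Since $\dim {\bf A}=n$ this forces $m-1\geq n$, and combined with $m\leq n+1$ we get $m=n+1$. Therefore $e_i:=e_1^{\,i}$, $1\leq i\leq n$, are $n$ nonzero vectors spanning the $n$-dimensional space ${\bf A}$, hence a basis.

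It remains to compute the multiplication in this basis. For $i,j\geq 1$ with $i+j\leq n$, associativity gives $e_ie_j=e_1^{\,i}e_1^{\,j}=e_1^{\,i+j}=e_{i+j}$; and for $i+j\geq n+1$ we have $e_ie_j=e_1^{\,i+j}\in {\bf A}^{n+1}=0$. As $i+j\geq 2$ in every case, the multiplication table is exactly that of $\mu_0^n$, so the identity on basis vectors $e_i\mapsto e_i$ is the required isomorphism.

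The only step that is not pure formalism is passing from ``$e_1\notin {\bf A}^2$'' to ``$e_1$ generates ${\bf A}$''; once this is in hand, associativity together with the dimension equalities defining null-filiformity forces everything else. So the point to be careful about is that Nakayama-style argument (or the cited equivalence that replaces it), together with the small bookkeeping ruling out $e_1^i=0$ for $i\leq n$.
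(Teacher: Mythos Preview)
Your proof is correct. Note, however, that the paper does not actually prove this theorem: it is quoted as a known result from \cite{MO,karel}, so there is no in-paper argument to compare against. Your approach---pick $e_1\notin{\bf A}^2$, use the Nakayama-type induction (or the cited equivalence with one-generatedness) to conclude ${\bf A}=\langle e_1\rangle$, then use the dimension count to force $e_1,\dots,e_1^{\,n}$ to be a basis---is the standard one and is exactly how the original references proceed. The bookkeeping you flagged (that $e_1^i\neq 0$ for $i\leq n$) is handled cleanly by your minimality-of-$m$ argument: $n$ vectors spanning an $n$-dimensional space are automatically linearly independent, so nothing more is needed there.
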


\begin{definition}
An $n$-dimensional algebra is called filiform if $\dim (\mathbf{A} ^i)=n-i, \ 2\leq i \leq n$.
\end{definition}

\begin{definition}
An $n$-dimensional associative algebra $\bf{A}$ is called quasi-filiform algebra if ${\bf A}^{n-2}\neq0$ and ${\bf A}^{n-1}=0$.
\end{definition}

\begin{definition}
Given a nilpotent associative algebra $\bf{A}$, put
${\bf A}_i={\bf A}^i/{\bf A}^{i+1}, \ 1 \leq i\leq k-1$, and
$\gr {\bf A} = {\bf A}_1 \oplus {\bf A}_2\oplus\dots \oplus {\bf A}_{k}$.
Then ${\bf A}_i{\bf A}_j\subseteq {\bf A}_{i+j}$ and we
obtain the graded algebra $\gr \bf{A}$. If $\gr \bf{A}$ and $\bf{A}$ are isomorphic,
denoted by $\gr {\bf A} \cong {\bf A}$, we say that the algebra $\bf{A}$ is naturally
graded.
\end{definition}

All filiform and naturally graded quasi-filiform associative algebras were classified in  \cite{KL}.

\begin{theorem}[\cite{KL}]
Every $n$-dimensional ($n>3$) complex filiform associative algebra is isomorphic to one of the next pairwise non-isomorphic algebras with basis $\{ e_1,e_2,\dots,e_n\}$:
\[\begin{array}{lllll}
\mu_{1,1}^n(\mu_0^{n-1}\oplus\mathbb{C}) &: & e_ie_j=e_{i+j},  & & \\
\mu_{1,2}^n &: & e_ie_j=e_{i+j},  & e_ne_n=e_{n-1}, & \\
\mu_{1,3}^n&: & e_ie_j=e_{i+j},  & e_1e_n=e_{n-1}, & \\
\mu_{1,4}^n&: & e_ie_j=e_{i+j},  & e_1e_n=e_{n-1}, & e_ne_n=e_{n-1}
\end{array}\]
where $2\leq i+j\leq n-1$.
\end{theorem}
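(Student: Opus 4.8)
The plan is to fix a basis of $\mathbf{A}$ adapted to the descending chain $\mathbf{A}=\mathbf{A}^1\supset\mathbf{A}^2\supset\dots\supset\mathbf{A}^n=0$, cut the multiplication table down to two scalar parameters using associativity, and then remove the remaining freedom by admissible changes of basis. I would begin by identifying $\gr\mathbf{A}$. Since $\dim\mathbf{A}/\mathbf{A}^2=2$ the algebra is $2$-generated, so $\gr\mathbf{A}$ is a naturally graded filiform associative algebra generated by $\mathbf{A}_1=\mathbf{A}/\mathbf{A}^2$. Some element $\bar e_1\in\mathbf{A}_1$ satisfies $\bar e_1^2\neq 0$: otherwise $\bar u^2=\bar v^2=(\bar u+\bar v)^2=0$ for any basis $\{\bar u,\bar v\}$ of $\mathbf{A}_1$, whence $\bar u\bar v=-\bar v\bar u$ and a short associativity computation gives $\mathbf{A}_3=0$, impossible because $n\geq 4$. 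Fix such an $\bar e_1$ and, after adjusting the complementary generator $\bar f$ of $\mathbf{A}_1$, assume $\bar e_1\bar f=0$. Then $\mathbf{A}_2=\mathbb{C}\bar e_1^2$ and, since $\mathbf{A}_3\neq 0$, also $\bar e_1^3\neq 0$; the identities $\bar e_1(\bar f\bar e_1)=(\bar e_1\bar f)\bar e_1=0$ and $\bar f(\bar f\bar e_1)=\bar f^2\bar e_1$ then force $\bar f\bar e_1=\bar f^2=0$, and an induction using $\dim\mathbf{A}_k=1$ gives $\bar e_1^k\neq 0$ for $2\leq k\leq n-1$ together with $\bar f\mathbf{A}_k=\mathbf{A}_k\bar f=0$ for every $k$. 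Hence $\bar f$ spans a $1$-dimensional ideal with zero product, $\langle\bar e_1\rangle\cong\mu_0^{n-1}$, and $\gr\mathbf{A}\cong\mu_0^{n-1}\oplus\mathbb{C}$. Lifting this picture gives a basis $e_1,\dots,e_n$ of $\mathbf{A}$ with $e_k=e_1^k$ for $2\leq k\leq n-1$, $e_1^n=0$, $\mathbf{A}^{n-1}=\mathbb{C}e_{n-1}$, $e_ie_j=e_{i+j}$ (with the convention $e_m=0$ for $m\geq n$) for $1\leq i,j\leq n-1$, and $e_n\in\mathbf{A}\setminus\mathbf{A}^2$ lifting the central generator of $\gr\mathbf{A}$, so that $e_1e_n,\ e_ne_1,\ e_n^2\in\mathbf{A}^3$ and $e_ie_n,\ e_ne_i\in\mathbf{A}^{i+2}$ for $i\geq 2$.

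The next step is to pin down and normalize the products involving $e_n$. Write $e_1e_n=\sum_{k\geq 3}\alpha_ke_k$ and $e_ne_1=\sum_{k\geq 3}\beta_ke_k$; then $e_ie_n=e_1^{i-1}(e_1e_n)$ and $e_ne_i=(e_ne_1)e_1^{i-1}$ for $i\geq 2$ express every remaining $e_n$-product through the $\alpha_k$, $\beta_k$ and $e_n^2$, while $(e_1e_n)e_1=e_1(e_ne_1)$ yields $\alpha_k=\beta_k$ for $3\leq k\leq n-2$. The substitution $e_n\mapsto e_n-\sum_{k=2}^{n-2}\beta_{k+1}e_k$ keeps $e_n\notin\mathbf{A}^2$, does not disturb the relations $e_ie_j=e_{i+j}$, makes $e_ne_1=0$, and — using $\alpha_k=\beta_k$ for $k\leq n-2$ — leaves $e_1e_n=\gamma e_{n-1}$ with $\gamma=\alpha_{n-1}-\beta_{n-1}$, together with $e_ie_n=e_ne_i=0$ for $i\geq 2$; finally $(e_n^2)e_1=e_n(e_ne_1)=0$ kills the components of $e_n^2$ in $\mathbb{C}e_3+\dots+\mathbb{C}e_{n-2}$, leaving $e_n^2=\delta e_{n-1}$. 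Thus $\mathbf{A}$ is described by the pair $(\gamma,\delta)\in\mathbb{C}^2$. A rescaling $e_1\mapsto te_1$ (hence $e_k\mapsto t^ke_k$) combined with $e_n\mapsto se_n$, which is solvable over $\mathbb{C}$ precisely because $n-3\geq 1$, reduces $(\gamma,\delta)$ to one of $(0,0)$, $(0,1)$, $(1,0)$, $(1,1)$, that is, to $\mu_{1,1}^n$, $\mu_{1,2}^n$, $\mu_{1,3}^n$, $\mu_{1,4}^n$ respectively.

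For the pairwise non-isomorphism I would separate the four algebras by two invariants. The dimension of the left annihilator $\{x\in\mathbf{A}:x\mathbf{A}=0\}$ equals $2$ for $\mu_{1,1}^n$ and $\mu_{1,3}^n$ and equals $1$ for $\mu_{1,2}^n$ and $\mu_{1,4}^n$; and $\mu_{1,1}^n$, $\mu_{1,2}^n$ are commutative whereas $\mu_{1,3}^n$, $\mu_{1,4}^n$ are not, since there $e_1e_n=e_{n-1}\neq 0=e_ne_1$. The four resulting pairs of values are distinct, so the algebras are pairwise non-isomorphic.

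The step I expect to be the main obstacle is the second one: keeping track of the associativity relations among the coefficients $\alpha_k$, $\beta_k$ and the components of $e_n^2$, and verifying that the successive changes of basis used to reach the normal form do not reintroduce terms already eliminated. The graded classification in the first step is short, and the invariant computation in the last step is routine.
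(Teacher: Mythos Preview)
The paper does not prove this theorem; it is quoted from \cite{KL} and used only as input for the subsequent central-extension computations, so there is no in-paper argument to compare your proposal against.

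That said, your outline is essentially correct and is the natural direct approach. The graded step is fine: once $\bar e_1^2\neq 0$ and $\bar e_1\bar f=0$, the identities $\bar e_1(\bar f\bar e_1)=(\bar e_1\bar f)\bar e_1=0$ together with $\bar e_1^3\neq 0$ force $\bar f\bar e_1=0$, and then $(\bar f^2)\bar e_1=\bar f(\bar f\bar e_1)=0$ forces $\bar f^2=0$; your sentence runs these two deductions together but the content is right. In the lifting step the associativity bookkeeping does exactly what you claim: $(e_1e_n)e_1=e_1(e_ne_1)$ matches $\alpha_k$ with $\beta_k$ for $k\le n-2$, the substitution $e_n\mapsto e_n-\sum_{k=2}^{n-2}\beta_{k+1}e_k$ kills $e_ne_1$ and leaves $e_1e_n=\gamma e_{n-1}$ without reintroducing lower terms (all correction terms lie in $\mathbf{A}^{\ge 4}$), and $(e_n^2)e_1=e_n(e_ne_1)=0$ then pins $e_n^2$ to $\delta e_{n-1}$. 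The rescaling $e_1\mapsto te_1$, $e_n\mapsto se_n$ sends $(\gamma,\delta)$ to $(st^{2-n}\gamma,\ s^2t^{1-n}\delta)$, and the system $\gamma'=\delta'=1$ is solvable over $\mathbb{C}$ precisely because $n-3\ge 1$, as you say. Your separating invariants (dimension of the left annihilator and commutativity) are correct and distinguish the four algebras.
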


\begin{theorem}[\cite{KL}]
 Let $\bf{A}$ be $n$-dimensional $(n \geq 6)$ complex naturally graded quasi-filiform  non-split associative algebra. Then it is isomorphic to one of the following pairwise non-isomorphic algebras:
\[\begin{array}{llllll}
\mu_{2,1}^n&: &  e_ie_j=e_{i+j},  &   e_{n-1}e_1=e_n   \\
\mu_{2,2}^n(\alpha)&: & e_ie_j=e_{i+j}, & e_1e_{n-1}=e_n, & e_{n-1}e_1=\alpha e_n \\
\mu_{2,3}^n&: & e_ie_j=e_{i+j}, & e_{n-1}e_{n-1}=e_n & \\
\mu_{2,4}^n&: & e_ie_j=e_{i+j}, & e_1e_{n-1}=e_n, & e_{n-1}e_{n-1}=e_n\\
\end{array}\]
where $\alpha\in\mathbb{C}$ and $2\leq i+j\leq n-2$.
\end{theorem}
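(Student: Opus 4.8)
The plan is to work entirely with the natural grading. Write $\mathbf{A}=\mathbf{A}_1\oplus\cdots\oplus\mathbf{A}_{n-2}$, so that $\mathbf{A}_i\mathbf{A}_j\subseteq\mathbf{A}_{i+j}$, the top component is $\mathbf{A}_{n-2}$ (because $\mathbf{A}^{n-2}\neq 0=\mathbf{A}^{n-1}$), and $\mathbf{A}$ is generated by $\mathbf{A}_1$. Since every $\mathbf{A}_i$ with $i\leq n-2$ is nonzero and $\sum_{i=1}^{n-2}\dim\mathbf{A}_i=n$, one has $\dim\mathbf{A}_1\leq 3$. If $\dim\mathbf{A}_1=1$ then $\mathbf{A}$ is one-generated, hence null-filiform, so $\dim\mathbf{A}^{n-1}=2\neq 0$, contradicting quasi-filiformity. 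If $\dim\mathbf{A}_1=3$ then $\dim\mathbf{A}_i=1$ for every $i\geq 2$; writing the products $\mathbf{A}_1\times\mathbf{A}_1\to\mathbf{A}_2$ and $\mathbf{A}_1\times\mathbf{A}_2\to\mathbf{A}_3$ as a bilinear form $\phi$ and as linear functionals $\psi_l,\psi_r$ on $\mathbf{A}_1$ (with values in the one-dimensional targets) and imposing associativity $(xy)t=x(yt)$ for $x,y,t\in\mathbf{A}_1$, one deduces $\psi_l=\psi_r=:\psi$ and $\phi=k\,\psi\otimes\psi$ for some scalar $k$; then any $0\neq x\in\ker\psi$ satisfies $x\mathbf{A}=\mathbf{A}x=0$, so $\langle x\rangle$ is a zero-product two-sided ideal with the graded subalgebra spanned by the remaining homogeneous basis vectors as a complement, contradicting non-splitness. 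Hence $\dim\mathbf{A}_1=2$, and then exactly one of $\mathbf{A}_2,\dots,\mathbf{A}_{n-2}$ is two-dimensional and the rest are one-dimensional.

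Next I would fix a convenient basis. Because $\mathbf{A}^{n-2}=\mathbf{A}_{n-2}\neq 0$ is spanned by products of $n-2$ elements of $\mathbf{A}_1$, there is an $e_1\in\mathbf{A}_1$ with $e_1^{\,n-2}\neq 0$; set $e_i:=e_1^{\,i}$ for $2\leq i\leq n-2$, so that $\langle e_1,\dots,e_{n-2}\rangle$ is a ``backbone'' subalgebra isomorphic to $\mu_0^{\,n-2}$ with $e_ie_j=e_{i+j}$ whenever $2\leq i+j\leq n-2$, and extend to a basis $\{e_1,e_{n-1}\}$ of $\mathbf{A}_1$. A short induction identifies the two-dimensional component as $\mathbf{A}_2$: if $\dim\mathbf{A}_2=1$, then $\mathbf{A}_2=\mathbf{A}_1\mathbf{A}_1=\langle e_1^{\,2}\rangle$ and $e_1e_{n-1},e_{n-1}e_1\in\langle e_1^{\,2}\rangle$, so associativity forces $\mathbf{A}_j=\langle e_1^{\,j}\rangle$ for every $j$, whence $\dim\mathbf{A}=n-1$, a contradiction. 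Thus $\mathbf{A}_2=\langle e_2,e_n\rangle$ for a suitable $e_n$, $\mathbf{A}_j=\langle e_j\rangle$ for $3\leq j\leq n-2$, and $\{e_1,\dots,e_n\}$ is a basis of $\mathbf{A}$.

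It then remains to compute all products involving $e_{n-1}$ and $e_n$. Write $e_1e_{n-1}=\alpha_1e_2+ae_n$, $e_{n-1}e_1=\alpha_2e_2+be_n$, $e_{n-1}^{\,2}=\alpha_3e_2+ce_n$, and note the constraints forced by the grading: $e_{n-1}\mathbf{A}_j,\mathbf{A}_je_{n-1}\subseteq\mathbf{A}_{j+1}$ and $e_n\mathbf{A}_j,\mathbf{A}_je_n\subseteq\mathbf{A}_{j+2}$ for $j\geq 2$, while $e_n\mathbf{A}_1,\mathbf{A}_1e_n\subseteq\mathbf{A}_3$ and $e_n^{\,2}\in\mathbf{A}_4$. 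Feeding these into the associativity identities $(e_{n-1}e_1)e_1=e_{n-1}(e_1e_1)$, $(e_1e_1)e_{n-1}=e_1(e_1e_{n-1})$, $(e_1e_{n-1})e_1=e_1(e_{n-1}e_1)$, $(e_1e_{n-1})e_{n-1}=e_1(e_{n-1}^{\,2})$, $(e_{n-1}e_1)e_{n-1}=e_{n-1}(e_1e_{n-1})$, $(e_{n-1}^{\,2})e_1=e_{n-1}(e_{n-1}e_1)$ and all their iterates along the backbone, and applying the admissible (i.e.\ grading-preserving) changes of basis — an invertible transformation of $\mathbf{A}_1$ together with the transformations it induces on the $\mathbf{A}_i$ — one reduces to $\alpha_1=\alpha_2=\alpha_3=0$ and shows that $e_{n-1}$ annihilates $\mathbf{A}^2$ on both sides and $e_n$ annihilates $\mathbf{A}$ on both sides. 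Hence the entire multiplication table is carried by the triple $(a,b,c)$ through $e_1e_{n-1}=ae_n$, $e_{n-1}e_1=be_n$, $e_{n-1}^{\,2}=ce_n$.

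Finally I would normalise and separate. The triple $(0,0,0)$ is impossible, for then $e_{n-1}$ annihilates $\mathbf{A}$ and $\langle e_{n-1}\rangle$ splits off as a direct summand; rescaling $e_1,e_{n-1},e_n$ then carries $(a,b,c)$ to one of $(0,1,0)$, $(1,\alpha,0)$, $(0,0,1)$ or $(1,0,1)$, which are exactly $\mu_{2,1}^n$, $\mu_{2,2}^n(\alpha)$, $\mu_{2,3}^n$ and $\mu_{2,4}^n$; the ratio $\alpha=b/a$ in the second case is unaffected by any admissible rescaling, so it is a genuine modulus. Pairwise non-isomorphism is then read off from invariants preserved by every isomorphism: whether $\mathbf{A}$ is commutative (so for $\mu_{2,3}^n$ and $\mu_{2,2}^n(1)$, not otherwise), whether $\mathbf{A}_1$ contains a nonzero element of square zero (so for $\mu_{2,1}^n$ and the family $\mu_{2,2}^n(\alpha)$, not for $\mu_{2,3}^n$ or $\mu_{2,4}^n$), the dimensions of the left and right annihilators of $\mathbf{A}$, and the value of $\alpha$. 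The hard part will be the third step: verifying that, across all $n-3$ graded degrees, associativity genuinely forces every structure constant other than $a,b,c$ to vanish — in particular that the extra direction $e_n$ cannot be pushed into a component of degree $\geq 3$ and that the backbone coefficients $\alpha_i$ can always be removed — and it is precisely there that the hypothesis $n\geq 6$ and the full strength of associativity are used.
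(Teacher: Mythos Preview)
This theorem carries no proof in the paper: it is quoted from the reference \cite{KL} as a known classification and used as input for the subsequent central-extension computations, so there is no argument here to compare yours against.

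Your outline is the natural direct approach and matches in spirit what one finds in \cite{KL}. Two places deserve tightening. First, in the final normalisation you say that ``rescaling $e_1,e_{n-1},e_n$'' carries $(a,b,c)$ to one of the four listed triples, but pure rescaling will not move, for instance, $(1,1,1)$ into any of them; you also need the substitution $e_1\mapsto e_1+q\,e_{n-1}$ (which you did allow earlier under ``invertible transformation of $\mathbf{A}_1$''), after which the backbone $e_i=e_1^{\,i}$ must be rebuilt before the new $(a,b,c)$ can be read off. Second, the third step must track more than the coefficients $\alpha_1,\alpha_2,\alpha_3$: the products $e_1e_n$, $e_ne_1$, $e_{n-1}e_n$, $e_ne_{n-1}$ and $e_n^{\,2}$ all land in the one-dimensional pieces $\mathbf{A}_3,\mathbf{A}_4$ and their coefficients interact with the $\alpha_i$ through associativity, so one has to exhibit a single choice of the complement $e_n\in\mathbf{A}_2$ that kills all of them simultaneously. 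These are precisely the computations hidden behind your phrase ``the hard part'', and carrying them out cleanly is where the hypothesis $n\ge 6$ is actually used.
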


\subsection{Basic definitions and methods}
During this paper, we are using the notations and methods well written in \cite{ha17,hac16,cfk18}
and adapted for the associative case with some modifications.
From now, we will give only some important definitions.

Let $({\bf A}, \cdot)$ be an associative algebra over an arbitrary base field $\bf k$ of
characteristic different from $2$ and $\mathbb V$ a vector space over the same base field ${\bf k}$. Then the $\bf k$-linear space $Z^{2}\left(
\bf A,\mathbb V \right) $ is defined as the set of all  bilinear maps $\theta  \colon {\bf A} \times {\bf A} \longrightarrow {\mathbb V}$,
such that \[\theta(xy,z)=\theta(x,yz).\]
 Its elements will be called \emph{cocycles}. For a
linear map $f$ from $\bf A$ to  $\mathbb V$, if we write $\delta f\colon {\bf A} \times
{\bf A} \longrightarrow {\mathbb V}$ by $\delta f  (x,y ) =f(xy )$, then $\delta f\in Z^{2}\left( {\bf A},{\mathbb V} \right) $. We define $B^{2}\left(
{\bf A},{\mathbb V}\right) =\left\{ \theta =\delta f\ : f\in \Hom\left( {\bf A},{\mathbb V}\right) \right\} $.
One can easily check that $B^{2}(\bf A,\mathbb V)$ is a linear subspace of $Z^{2}\left( {\bf A},{\mathbb V}\right) $ which elements are called
\emph{coboundaries}. We define the \emph{second cohomology space} $H^{2}\left( {\bf A},{\mathbb V}\right) $ as the quotient space $Z^{2}
\left( {\bf A},{\mathbb V}\right) \big/B^{2}\left( {\bf A},{\mathbb V}\right) $.

\

Let $\Aut(\mathbf{A}) $ be the automorphism group of the associative  algebra $\mathbf{A} $ and let $\phi \in \Aut(\mathbf{A})$. For $\theta \in
Z^{2}\left( {\bf A},{\mathbb V}\right) $ define $\phi \theta (x,y)
=\theta \left( \phi \left( x\right) ,\phi \left( y\right) \right) $. Then $\phi \theta \in Z^{2}\left( {\bf A},{\mathbb V}\right) $. So, $\Aut(\mathbf{A})$
acts on $Z^{2}\left( {\bf A},{\mathbb V}\right) $. It is easy to verify that
 $B^{2}\left( {\bf A},{\mathbb V}\right) $ is invariant under the action of $\Aut(\mathbf{A})$ and so we have that $\Aut(\mathbf{A})$ acts on $H^{2}\left( {\bf A},{\mathbb V}\right)$.

\

Let $\bf A$ be an associative  algebra of dimension $m<n$ over an arbitrary base field $\bf k$ of characteristic different from $2$, and ${\mathbb V}$ be a $\bf k$-vector
space of dimension $n-m$. For any $\theta \in Z^{2}\left(
{\bf A},{\mathbb V}\right) $ define on the linear space ${\bf A}_{\theta } \coloneqq {\bf A}\oplus {\mathbb V}$ the
bilinear product `` $\left[ -,-\right] _{{\bf A}_{\theta }}$'' by $\left[ x+x^{\prime },y+y^{\prime }\right] _{{\bf A}_{\theta }}=
 xy +\theta(x,y) $ for all $x,y\in {\bf A},x^{\prime },y^{\prime }\in {\mathbb V}$.
The algebra ${\bf A}_{\theta }$ is an associative algebra which is called an $(n-m)$-\emph{dimensional central extension} of ${\bf A}$ by ${\mathbb V}$. Indeed, we have, in a straightforward way, that ${\bf A_{\theta}}$ is an associative algebra if and only if $\theta \in Z^2({\bf A}, {\mathbb V})$.

We also call to the
set $\Ann(\theta)=\left\{ x\in {\bf A}:\theta \left( x, {\bf A} \right)+ \theta \left({\bf A} ,x\right) =0\right\} $
the \emph{annihilator} of $\theta $. We recall that the \emph{annihilator} of an  algebra ${\bf A}$ is defined as
the ideal $\Ann(  \mathbf{A} ) =\left\{ x\in {\bf A}:  x{\bf A}+ {\bf A}x =0\right\}$ and observe
 that
$\Ann\left( {\bf A}_{\theta }\right) = \Ann(\theta) \cap \Ann(\mathbf{A})
 \oplus {\mathbb V}$.

\

We have the next  key result:

\begin{lemma}
Let ${\bf A}$ be an $n$-dimensional associative algebra such that $\dim(\Ann({\bf A}))=m\neq0$. Then there exists, up to isomorphism, a unique $(n-m)$-dimensional associative algebra $\mathbf{A}'$ and a bilinear map $\theta \in Z^2({\bf A}, {\mathbb V})$ with $\Ann({\bf A})\cap \Ann(\theta)=0$, where $\mathbb V$ is a vector space of dimension m, such that $\mathbf{A} \cong {\mathbf{A}'}_{\theta}$ and
 ${\bf A}/\Ann({\bf A})\cong \mathbf{A}'$.
\end{lemma}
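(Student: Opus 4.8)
The plan is to mimic the classical Skjelbred--Sund argument, adapted to the associative setting. Set $m=\dim(\Ann({\bf A}))$ and let ${\bf A}'$ be any complement of $\Ann({\bf A})$ in ${\bf A}$, so that as vector spaces ${\bf A}={\bf A}'\oplus \Ann({\bf A})$, and let $\pi\colon {\bf A}\to {\bf A}'$ be the associated projection. First I would observe that ${\bf A}/\Ann({\bf A})$ carries a natural associative multiplication (since $\Ann({\bf A})$ is an ideal) and that the obvious map ${\bf A}'\to {\bf A}/\Ann({\bf A})$ is a vector space isomorphism; transporting the quotient product to ${\bf A}'$ makes ${\bf A}'$ an associative algebra with ${\bf A}/\Ann({\bf A})\cong {\bf A}'$. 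Concretely, for $x,y\in {\bf A}'$ the product in ${\bf A}'$ is $\pi(xy)$, where $xy$ is computed in ${\bf A}$.

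Next I would define the cocycle. Take ${\mathbb V}=\Ann({\bf A})$ and set $\theta\colon {\bf A}'\times {\bf A}'\to {\mathbb V}$ by $\theta(x,y)=xy-\pi(xy)=(\id-\pi)(xy)$, i.e. the $\Ann({\bf A})$-component of the product $xy$ taken in ${\bf A}$. One checks $\theta\in Z^2({\bf A}',{\mathbb V})$: the identity $\theta(xy',z)=\theta(x,y'z)$ for $x,y',z\in {\bf A}'$ follows by applying $(\id-\pi)$ to the associativity relation $(x\cdot_{{\bf A}'}y')\cdot_{{\bf A}'}z=x\cdot_{{\bf A}'}(y'\cdot_{{\bf A}'}z)$ in ${\bf A}$, after expanding each ${\bf A}'$-product as $\pi$ of the ${\bf A}$-product and using that $\Ann({\bf A})$ multiplies to zero on both sides. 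Then the map ${\bf A}'_\theta={\bf A}'\oplus {\mathbb V}\to {\bf A}$ sending $(x,v)\mapsto x+v$ is easily verified to be an algebra isomorphism, since $[x+v,y+w]_{{\bf A}'_\theta}=x\cdot_{{\bf A}'}y+\theta(x,y)=\pi(xy)+(\id-\pi)(xy)=xy$ (the summands involving $v,w$ vanish as they lie in $\Ann({\bf A})$). This also shows $\Ann({\bf A}')\cap\Ann(\theta)=0$: any $x\in {\bf A}'$ in this intersection would, via the isomorphism, lie in $\Ann({\bf A})\cap {\bf A}'=0$, using the formula $\Ann({\bf A}'_\theta)=(\Ann({\bf A}')\cap\Ann(\theta))\oplus {\mathbb V}$ recalled just before the lemma.

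For uniqueness, suppose ${\bf A}\cong {\bf B}_\psi$ for some $(n-m)$-dimensional associative ${\bf B}$ and $\psi\in Z^2({\bf B},{\mathbb W})$ with $\Ann({\bf B})\cap\Ann(\psi)=0$ and $\dim{\mathbb W}=m$. Then the displayed formula for the annihilator of a central extension gives $\Ann({\bf B}_\psi)={\mathbb W}$, so under the isomorphism ${\mathbb W}$ corresponds to $\Ann({\bf A})$, forcing ${\bf B}\cong {\bf B}_\psi/\Ann({\bf B}_\psi)\cong {\bf A}/\Ann({\bf A})\cong {\bf A}'$; this is the uniqueness of ${\bf A}'$. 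The uniqueness of $\theta$ is only claimed up to the natural equivalence built into the construction (change of complement ${\bf A}'$ and of the identification ${\mathbb V}\cong\Ann({\bf A})$), and follows by tracking how $\theta$ transforms under an algebra isomorphism ${\bf A}'_\theta\to {\bf B}_\psi$ that restricts to an isomorphism ${\mathbb V}\to{\mathbb W}$ on the centres; this is precisely the action of $\Aut$ on $Z^2$ combined with addition of a coboundary.

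The main obstacle is bookkeeping rather than conceptual: verifying the cocycle identity for $\theta$ requires care because the product in ${\bf A}'$ is $\pi$ of the product in ${\bf A}$, so associativity in ${\bf A}'$ is associativity in ${\bf A}$ modulo $\Ann({\bf A})$, and one must confirm the ``defect'' terms assemble correctly into the $\theta(xy,z)=\theta(x,yz)$ relation without stray contributions — in particular that products of elements of $\Ann({\bf A})$ with anything vanish so that expanding $\pi(xy)\cdot z$ versus $xy\cdot z$ differ by exactly $\theta(\pi(xy),z)$ is legitimate. I expect this to go through cleanly once one writes $xy=\pi(xy)+\theta(x,y)$ throughout and cancels. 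The rest — that ${\bf A}'_\theta\cong{\bf A}$ and the annihilator intersection vanishes — is immediate from the annihilator formula recalled in the preliminaries.
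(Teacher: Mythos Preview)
Your proposal is correct and follows essentially the same construction as the paper: choose a linear complement ${\bf A}'$ of $\Ann({\bf A})$, transport the product via the projection $\pi$ (the paper's $P$), define $\theta(x,y)=xy-\pi(xy)$, and read off ${\bf A}\cong{\bf A}'_\theta$. You supply more detail than the paper on the cocycle verification and on uniqueness, but the architecture is identical.
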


\begin{proof}
Let $\mathbf{A}'$ be a linear complement of $\Ann({\bf A})$ in ${\bf A}$. Define a linear map $P \colon {\bf A} \longrightarrow \mathbf{A}'$ by $P(x+v)=x$ for $x\in \mathbf{A}'$ and $v\in \Ann({\bf A})$ and define a multiplication on $\mathbf{A}'$ by $[x, y]_{\mathbf{A}'}=P(x y)$ for $x, y \in \mathbf{A}'$.
For $x, y \in {\bf A}$, we have
\[P(xy)=P((x-P(x)+P(x))(y- P(y)-P(y)))=P(P(x) P(y))=[P(x), P(y)]_{\mathbf{A}'}. \]

Since $P$ is a homomorphism we have that $P({\bf A})=\mathbf{A}'$ is an associative algebra and
 ${\bf A}/\Ann({\bf A})\cong \mathbf{A}'$, which give us the uniqueness. Now, define the map $\theta \colon \mathbf{A}' \times \mathbf{A}' \longrightarrow \Ann({\bf A})$ by $\theta(x,y)=xy- [x,y]_{\mathbf{A}'}$.
  Thus, $\mathbf{A}'_{\theta}$ is ${\bf A}$ and therefore $\theta \in Z^2({\bf A}, {\mathbb V})$ and $\Ann({\bf A})\cap \Ann(\theta)=0$.
\end{proof}

\

However, in order to solve the isomorphism problem we need to study the
action of $\Aut(\mathbf{A})$ on $H^{2}\left( {\bf A},{\mathbb V}
\right) $. To do that, let us fix $e_{1},\ldots ,e_{s}$ a basis of ${\mathbb V}$, and $
\theta \in Z^{2}\left( {\bf A},{\mathbb V}\right) $. Then $\theta $ can be uniquely
written as $\theta \left( x,y\right) =
\displaystyle \sum_{i=1}^{s} \theta _{i}\left( x,y\right) e_{i}$, where $\theta _{i}\in
Z^{2}\left( {\bf A},\bf k\right) $. Moreover, $\Ann(\theta)=\Ann(\theta _{1})\cap \Ann(\theta _{2})\cdots \cap \Ann(\theta _{s})$. Further, $\theta \in
B^{2}\left( {\bf A},{\mathbb V}\right) $\ if and only if all $\theta _{i}\in B^{2}\left( {\bf A},
\bf k\right) $.

\;

Given an associative algebra ${\bf A}$, if ${\bf A}=I\oplus \bf kx$
is a direct sum of two ideals, then $\bf kx$ is called an \emph{annihilator component} of ${\bf A}$.

\begin{definition}
A central extension of an algebra $\bf A$ without annihilator component is called a non-split central extension.
\end{definition}

It is not difficult to prove (see \cite[Lemma 13]{hac16}), that given an associative algebra ${\bf A}_{\theta}$, if we write as
above $\theta \left( x,y\right) = \displaystyle \sum_{i=1}^{s} \theta_{i}\left( x,y\right) e_{i}\in Z^{2}\left( {\bf A},{\mathbb V}\right) $ and we have
$\Ann(\theta)\cap \Ann\left( {\bf A}\right) =0$, then ${\bf A}_{\theta }$ has an
annihilator component if and only if $\left[ \theta _{1}\right] ,\left[
\theta _{2}\right] ,\ldots ,\left[ \theta _{s}\right] $ are linearly
dependent in $H^{2}\left( {\bf A},\bf k\right) $.

\;

Let ${\mathbb V}$ be a finite-dimensional vector space over $\bf k$. The \emph{Grassmannian} $G_{k}\left( {\mathbb V}\right) $ is the set of all $k$-dimensional
linear subspaces of $ {\mathbb V}$. Let $G_{s}\left( H^{2}\left( {\bf A},\bf k\right) \right) $ be the Grassmannian of subspaces of dimension $s$ in
$H^{2}\left( {\bf A},\bf k\right) $. There is a natural action of $\Aut(\mathbf{A})$ on $G_{s}\left( H^{2}\left( {\bf A},\bf k\right) \right) $.
Let $\phi \in \Aut(\mathbf{A})$. For $W=\left\langle
\left[ \theta _{1}\right] ,\left[ \theta _{2}\right] ,\dots,\left[ \theta _{s}
\right] \right\rangle \in G_{s}\left( H^{2}\left( {\bf A},\bf k
\right) \right) $ define $\phi W=\left\langle \left[ \phi \theta _{1}\right]
,\left[ \phi \theta _{2}\right] ,\dots,\left[ \phi \theta _{s}\right]
\right\rangle $. Then $\phi W\in G_{s}\left( H^{2}\left( {\bf A},\bf k \right) \right) $. We denote the orbit of $W\in G_{s}\left(
H^{2}\left( {\bf A},\bf k\right) \right) $ under the action of $\Aut(\mathbf{A})$ by $\Orb(W)$. Since given
\[
W_{1}=\left\langle \left[ \theta _{1}\right] ,\left[ \theta _{2}\right] ,\dots,
\left[ \theta _{s}\right] \right\rangle ,W_{2}=\left\langle \left[ \vartheta
_{1}\right] ,\left[ \vartheta _{2}\right] ,\dots,\left[ \vartheta _{s}\right]
\right\rangle \in G_{s}\left( H^{2}\left( {\bf A},\bf k\right)
\right),
\]
we easily have that in case $W_{1}=W_{2}$, then $ \bigcap\limits_{i=1}^{s} \Ann(\theta _{i})\cap \Ann\left( {\bf A}\right) = \bigcap\limits_{i=1}^{s}
 \Ann(\vartheta _{i})\cap \Ann( {\bf A}) $, and so we can introduce
the set
\[
T_{s}(\mathbf{A}) =\left\{ W=\left\langle \left[ \theta _{1}\right] ,
\left[ \theta _{2}\right] ,\dots,\left[ \theta _{s}\right] \right\rangle \in
G_{s}\left( H^{2}\left( {\bf A},\bf k\right) \right) : \bigcap\limits_{i=1}^{s}\Ann(\theta _{i})\cap \Ann(\mathbf{A}) =0\right\},
\]
which is stable under the action of $\Aut(\mathbf{A})$.

\

Now, let ${\mathbb V}$ be an $s$-dimensional linear space and let us denote by
$E\left( {\bf A},{\mathbb V}\right) $ the set of all \emph{non-split $s$-dimensional central extensions} of ${\bf A}$ by
${\mathbb V}$. We can write
\[
E\left( {\bf A},{\mathbb V}\right) =\left\{ {\bf A}_{\theta }:\theta \left( x,y\right) = \sum_{i=1}^{s}\theta _{i}\left( x,y\right) e_{i} \ \ \text{and} \ \ \left\langle \left[ \theta _{1}\right] ,\left[ \theta _{2}\right] ,\dots,
\left[ \theta _{s}\right] \right\rangle \in T_{s}(\mathbf{A}) \right\} .
\]
We also have the next result, which can be proved as \cite[Lemma 17]{hac16}.

\begin{lemma}
 Let ${\bf A}_{\theta },{\bf A}_{\vartheta }\in E\left( {\bf A},{\mathbb V}\right) $. Suppose that $\theta \left( x,y\right) =  \displaystyle \sum_{i=1}^{s}
\theta _{i}\left( x,y\right) e_{i}$ and $\vartheta \left( x,y\right) =
\displaystyle \sum_{i=1}^{s} \vartheta _{i}\left( x,y\right) e_{i}$.
Then the associative algebras ${\bf A}_{\theta }$ and ${\bf A}_{\vartheta } $ are isomorphic
if and only if $\Orb\left\langle \left[ \theta _{1}\right] ,
\left[ \theta _{2}\right] ,\dots,\left[ \theta _{s}\right] \right\rangle =
\Orb\left\langle \left[ \vartheta _{1}\right] ,\left[ \vartheta
_{2}\right] ,\dots,\left[ \vartheta _{s}\right] \right\rangle $.
\end{lemma}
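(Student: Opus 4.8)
The plan is to adapt the Skjelbred--Sund correspondence to the associative setting, exactly along the lines of \cite[Lemma 17]{hac16}. The decisive point is that, since $\langle[\theta_1],\dots,[\theta_s]\rangle\in T_s(\mathbf{A})$, we have $\Ann(\theta)\cap\Ann(\mathbf{A})=\bigcap_{i=1}^s\Ann(\theta_i)\cap\Ann(\mathbf{A})=0$, so the identity $\Ann(\mathbf{A}_\theta)=\bigl(\Ann(\theta)\cap\Ann(\mathbf{A})\bigr)\oplus\mathbb{V}$ recorded above gives $\Ann(\mathbf{A}_\theta)=\mathbb{V}$, and likewise $\Ann(\mathbf{A}_\vartheta)=\mathbb{V}$. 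Hence any isomorphism $\Phi\colon\mathbf{A}_\vartheta\to\mathbf{A}_\theta$ must send $\mathbb{V}$ onto $\mathbb{V}$, so it descends to an isomorphism of quotients $\mathbf{A}_\vartheta/\mathbb{V}\cong\mathbf{A}\cong\mathbf{A}_\theta/\mathbb{V}$, i.e.\ to some $\phi\in\Aut(\mathbf{A})$.

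For the direct implication I would write $\Phi$ in block form with respect to the vector space decomposition $\mathbf{A}\oplus\mathbb{V}$: for $x\in\mathbf{A}$ put $\Phi(x)=\phi(x)+\psi(x)$ with $\phi(x)\in\mathbf{A}$ and $\psi(x)\in\mathbb{V}$, and set $\chi\coloneqq\Phi|_{\mathbb{V}}\in GL(\mathbb{V})$. Expanding $\Phi([x,y]_\vartheta)=[\Phi(x),\Phi(y)]_\theta$ and comparing $\mathbf{A}$-components shows $\phi(xy)=\phi(x)\phi(y)$, so $\phi$ is an algebra endomorphism, hence (by the first paragraph) an automorphism; comparing $\mathbb{V}$-components gives $\chi\bigl(\vartheta(x,y)\bigr)=\theta\bigl(\phi(x),\phi(y)\bigr)-\psi(xy)$, that is $\chi\circ\vartheta=\phi\theta-\delta\psi$. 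Writing $\chi(e_j)=\sum_i c_{ij}e_i$ and $\psi=\sum_i\psi_i e_i$ with $\psi_i\in\Hom(\mathbf{A},\mathbf{k})$, this reads $\sum_j c_{ij}\vartheta_j=\phi\theta_i-\delta\psi_i$ for each $i$; passing to $H^2(\mathbf{A},\mathbf{k})$ and using $\delta\psi_i\in B^2(\mathbf{A},\mathbf{k})$ yields $\phi[\theta_i]=\sum_j c_{ij}[\vartheta_j]$, where $(c_{ij})$ is invertible because $\chi$ is. Therefore $\phi\langle[\theta_1],\dots,[\theta_s]\rangle=\langle[\vartheta_1],\dots,[\vartheta_s]\rangle$, so the two orbits coincide.

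For the converse I would run this computation backwards. Given $\phi\in\Aut(\mathbf{A})$ with $\phi\langle[\theta_1],\dots,[\theta_s]\rangle=\langle[\vartheta_1],\dots,[\vartheta_s]\rangle$, both $\{\phi[\theta_i]\}$ and $\{[\vartheta_j]\}$ are bases of the same $s$-dimensional subspace (the generators are independent since the extensions lie in $E(\mathbf{A},\mathbb{V})$, and $\phi$ acts invertibly on $H^2(\mathbf{A},\mathbf{k})$), so there is an invertible matrix $(c_{ij})$ with $\phi[\theta_i]=\sum_j c_{ij}[\vartheta_j]$; choose $\psi_i\in\Hom(\mathbf{A},\mathbf{k})$ with $\phi\theta_i-\sum_j c_{ij}\vartheta_j=\delta\psi_i$, set $\psi=\sum_i\psi_i e_i$, define $\chi\in GL(\mathbb{V})$ by $\chi(e_j)=\sum_i c_{ij}e_i$, and put $\Phi\colon\mathbf{A}_\vartheta\to\mathbf{A}_\theta$, $\Phi(x+v)=\phi(x)+\psi(x)+\chi(v)$. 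A one-line check using $\phi(xy)=\phi(x)\phi(y)$ and $\chi\circ\vartheta=\phi\theta-\delta\psi$ shows $\Phi$ is a bijective algebra homomorphism.

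The routine-but-fiddly part is the middle-paragraph bookkeeping: the matrix $(c_{ij})$, the linear forms $\psi_i$, and the reductions modulo $B^2(\mathbf{A},\mathbf{k})$, together with the remark that $\phi$ is automatically surjective once $\Phi(\mathbb{V})=\mathbb{V}$. The genuinely essential input — and the only place the hypotheses "$\in E(\mathbf{A},\mathbb{V})$'' and "$\in T_s(\mathbf{A})$'' are really used — is the identification $\Ann(\mathbf{A}_\theta)=\mathbb{V}=\Ann(\mathbf{A}_\vartheta)$, which forces $\Phi(\mathbb{V})=\mathbb{V}$; after that the equivalence is purely formal.
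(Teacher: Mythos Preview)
Your proof is correct and is exactly the argument the paper has in mind: the paper does not give its own proof but simply says the result ``can be proved as \cite[Lemma 17]{hac16}'', and what you have written is precisely that Skjelbred--Sund argument transcribed to the associative setting. The key steps---identifying $\Ann(\mathbf{A}_\theta)=\mathbb{V}$ via the $T_s(\mathbf{A})$ hypothesis, decomposing an isomorphism in block form relative to $\mathbf{A}\oplus\mathbb{V}$, and reading off the cohomological relation $\phi[\theta_i]=\sum_j c_{ij}[\vartheta_j]$---match the standard proof, and your converse construction is the expected one.
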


From here, there exists a one-to-one correspondence between the set of $\Aut(\mathbf{A})$-orbits on $T_{s}\left( {\bf A}\right) $ and the set of
isomorphism classes of $E\left( {\bf A},{\mathbb V}\right) $. Consequently we have a
procedure that allows us, given the associative algebra $\mathbf{A}'$ of
dimension $n-s$, to construct all non-split central extensions of $\mathbf{A}'$. This procedure would be:

\; \;

{\centerline {\textsl{Procedure}}}

\begin{enumerate}
\item For a given associative algebra $\mathbf{A}'$ of dimension $n-s $, determine $H^{2}( \mathbf{A}',\mathbf {k}) $, $\Ann(\mathbf{A}')$ and $\Aut(\mathbf{A}')$.

\item Determine the set of $\Aut(\mathbf{A}')$-orbits on $T_{s}(\mathbf{A}') $.

\item For each orbit, construct the associative algebra corresponding to a
representative of it.
\end{enumerate}

\

Finally, let us introduce some of notation. Let ${\bf A}$ be an associative algebra with
a basis $e_{1},e_{2},\dots,e_{n}$. Then by $\Delta _{i,j}$\ we will denote the
associative bilinear form
$\Delta _{i,j} \colon {\bf A}\times {\bf A}\longrightarrow \bf k$
with $\Delta _{i,j}\left( e_{l},e_{m}\right) = \delta_{il}\delta_{jm}$.
Then the set $\left\{ \Delta_{i,j}:1\leq i, j\leq n\right\} $ is a basis for the linear space of
the bilinear forms on ${\bf A}$. Then every $\theta \in
Z^{2}\left( {\bf A},\bf k\right) $ can be uniquely written as $
\theta = \displaystyle \sum_{1\leq i,j\leq n} c_{ij}\Delta _{{i},{j}}$, where $
c_{ij}\in \bf k$.

\subsection{Central extension of null-filiform associative algebras}

Using the presented procedure we can easy find all central extensions of the  null-filiform associative algebras.
We need the following  proposition.
\begin{proposition} Let $\mu_0^n$ be the null-filiform associative algebra. Then
\[Z^2(\mu_0^n,\mathbb{C})=\langle\sum\limits_{j=1}^{i-1}\Delta_{j,i-j}, \ 2\leq i\leq n+1\rangle,\quad B^2(\mu_0^n,\mathbb{C})=\langle\sum\limits_{j=1}^{i-1}\Delta_{j,i-j}, \ 2\leq i\leq n\rangle,\]
\[H^2(\mu_0^n,\mathbb{C})=Z^2(\mu_0^n,\mathbb{C})/B^2(\mu_0^n,\mathbb{C})=\langle [ \sum\limits_{j=1}^{n}\Delta_{j,n+1-j}] \rangle,\]
\[\phi_0^n=
\begin{pmatrix}
 a_{1,1} &  &   & &  \\
  & a_{1,1}^2 & &0  &  \\
  &  & a_{1,1}^3 &  &  \\
  &  \ast & & \ddots &  \\
  &  &  &  & a_{1,1}^n
  \end{pmatrix},
   \quad \phi_0^n\in \Aut(\mu_0^n).\]
\end{proposition}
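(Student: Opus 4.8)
The plan is to compute the three objects $Z^2$, $B^2$, and $\Aut$ for $\mu_0^n$ directly from the multiplication table $e_ie_j=e_{i+j}$ (where $e_k=0$ for $k>n$), and then assemble $H^2$ as the quotient.

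First I would determine $Z^2(\mu_0^n,\Co)$. Write a general bilinear form $\theta=\sum_{i,j}c_{ij}\Delta_{i,j}$ and impose the cocycle identity $\theta(e_ie_j,e_k)=\theta(e_i,e_je_k)$ for all $i,j,k$. Since $e_ie_j=e_{i+j}$ (or $0$), this reads $\theta(e_{i+j},e_k)=\theta(e_i,e_{j+k})$, i.e. $c_{i+j,k}=c_{i,j+k}$ whenever both $i+j\le n$ and $j+k\le n$, and $0=0$ otherwise. The key observation is that this forces $c_{a,b}$ to depend only on the sum $a+b$: a chain of moves $c_{a,b}=c_{a-1,b+1}=\cdots$ connects all pairs with a fixed sum $s=a+b$, as long as the intermediate indices stay in range $\{1,\dots,n\}$, which they do. Hence for each $s$ with $2\le s\le n+1$ there is one free parameter, giving the cocycle $\sigma_s:=\sum_{j=1}^{s-1}\Delta_{j,s-j}$ (the sum ranging over pairs $(j,s-j)$ with both coordinates in $\{1,\dots,n\}$); note $\Delta_{i,j}$ with $i+j>n$ automatically satisfies the cocycle condition since $e_{i+j}=0$, but such a form is the zero form on the relevant products — more carefully, the pairs $(i,j)$ with $i+j\ge n+2$ contribute nothing because $c_{ij}$ for those is unconstrained but any such $\Delta_{i,j}$ is already the zero bilinear form composed trivially; I would phrase this so that the genuine generators are exactly $\{\sigma_s:2\le s\le n+1\}$, where $\sigma_{n+1}=\sum_{j=1}^n\Delta_{j,n+1-j}$.

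Next, for $B^2(\mu_0^n,\Co)$: a coboundary is $\delta f$ with $\delta f(e_i,e_j)=f(e_ie_j)=f(e_{i+j})$. Writing $f(e_k)=\lambda_k$ for $2\le k\le n$ (the value $f(e_1)$ never appears since $e_1$ is not a product), we get $\delta f=\sum_{k=2}^n\lambda_k\sigma_k$. Thus $B^2=\langle\sigma_s:2\le s\le n\rangle$, and it is immediate that $H^2=Z^2/B^2$ is one-dimensional, spanned by $[\sigma_{n+1}]=[\sum_{j=1}^n\Delta_{j,n+1-j}]$.

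Finally, for $\Aut(\mu_0^n)$: since $\mu_0^n$ is one-generated (by $e_1$, as $e_k=e_1^k$), an automorphism $\phi$ is determined by $\phi(e_1)=\sum_k a_{k,1}e_k$ with $a_{1,1}\ne0$ (invertibility forces the $e_1$-component to be nonzero, as $e_1$ generates modulo $\mathbf A^2$). Then $\phi(e_k)=\phi(e_1)^k$, and expanding $\left(\sum_m a_{m,1}e_m\right)^k$ using $e_ie_j=e_{i+j}$ shows that the coefficient of $e_k$ in $\phi(e_k)$ is $a_{1,1}^k$, with only lower-index (i.e. $e_m$, $m>k$) corrections coming from the higher terms of $\phi(e_1)$; conversely any such choice with $a_{1,1}\ne0$ gives an automorphism. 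This yields the lower-triangular shape displayed, with diagonal $(a_{1,1},a_{1,1}^2,\dots,a_{1,1}^n)$ and arbitrary entries below the diagonal. I would verify the multiplicativity $\phi(e_i)\phi(e_j)=\phi(e_{i+j})$ holds automatically from $\phi(e_k)=\phi(e_1)^k$ and associativity.

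The main obstacle is the careful bookkeeping in the cocycle computation: making sure the "constant along diagonals $a+b=s$" argument is airtight near the boundary of the index range (when $a+b$ is close to $n+1$ the connecting chain is short but still valid), and cleanly separating the genuinely new class $\sigma_{n+1}$ from the coboundaries $\sigma_2,\dots,\sigma_n$. Everything else is a routine, if slightly tedious, unwinding of $e_ie_j=e_{i+j}$.
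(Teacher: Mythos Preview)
Your overall approach matches the paper's (which simply says the result ``follows directly from the definition of a cocycle and an automorphism''), but there is a genuine gap in your cocycle computation.

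You correctly observe that the relation $c_{i+j,k}=c_{i,j+k}$ (valid when both $i+j\le n$ and $j+k\le n$) forces $c_{a,b}$ to depend only on $a+b$ whenever $a+b\le n+1$. However, your treatment of the range $a+b\ge n+2$ is wrong. You write that $\Delta_{i,j}$ with $i+j>n$ ``automatically satisfies the cocycle condition'' and that the corresponding $c_{ij}$ are ``unconstrained''; both claims are false, and together they would make $\dim Z^2$ far larger than $n$. For instance, with $i\ge 2$ apply the cocycle identity to the triple $(e_{i-1},e_1,e_j)$: the left side is $\theta(e_i,e_j)=c_{i,j}$, while the right side is $\theta(e_{i-1},e_{1+j})$, which vanishes when $j=n$. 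Thus $c_{i,n}=0$ for all $i\ge 2$. More generally, it is precisely the \emph{mixed} instances of the cocycle identity---those where exactly one of $e_{i+j}$, $e_{j+k}$ is zero---that force $c_{a,b}=0$ for every pair with $a+b\ge n+2$. This step is what pins $\dim Z^2$ down to $n$, and it is missing from your argument.

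A smaller point on the automorphism group: the entries below the diagonal in $\phi_0^n$ are not ``arbitrary''. Since $\phi(e_k)=\phi(e_1)^k$, the entire matrix is determined by its first column $(a_{1,1},a_{2,1},\dots,a_{n,1})$ with $a_{1,1}\ne 0$; the $\ast$ in the display denotes entries that are specific polynomials in these parameters, not independent free choices.
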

\begin{proof}
The proof follows directly from the definition of a cocycle and an automorphism.
\end{proof}

As $\dim(H^2(\mu_0^n,\mathbb{C}))=1$, it is easy to see the following result.
\begin{theorem}A central extension of an $n$-dimensional complex null-filiform associative algebra is isomorphic to
$\mu_0^{n+1}$.
\end{theorem}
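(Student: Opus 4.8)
The plan is to apply the Procedure directly with $\mathbf{A}' = \mu_0^n$ and $s=1$, using the data collected in the preceding Proposition. Since $\dim H^2(\mu_0^n,\mathbb{C}) = 1$, the Grassmannian $G_1(H^2(\mu_0^n,\mathbb{C}))$ consists of a single point, namely the whole space $W = \langle [\sum_{j=1}^{n}\Delta_{j,n+1-j}] \rangle$. First I would check that this $W$ actually lies in $T_1(\mu_0^n)$: the cocycle $\theta = \sum_{j=1}^{n}\Delta_{j,n+1-j}$ has $\Ann(\theta)$ equal to the span of $e_2,\dots,e_n$ (since $e_1$ pairs nontrivially), while $\Ann(\mu_0^n) = \langle e_n\rangle$; one must verify $\Ann(\theta)\cap\Ann(\mu_0^n) \neq \langle e_n\rangle$ — but here $e_n \in \Ann(\theta)$, so in fact the intersection is $\langle e_n \rangle \neq 0$, which would seem to fail the $T_1$ condition. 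The resolution is that we must instead pick a cocycle representative whose annihilator avoids $e_n$; since $B^2$ is large, the class $[\sum \Delta_{j,n+1-j}]$ has many representatives, and I would argue that no representative can have $e_n$ in its annihilator precisely because the $\Delta_{j,n+1-j}$ terms are forced, so actually the relevant computation is that $\Ann(\theta)$ for \emph{this} class always contains $e_n$ — meaning one should reconsider.

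Let me restart the reasoning more carefully. The cleaner route: by the key Lemma, every $(n+1)$-dimensional associative algebra $\mathbf{B}$ with $\dim\Ann(\mathbf{B}) = 1$ and $\mathbf{B}/\Ann(\mathbf{B}) \cong \mu_0^n$ arises as $(\mu_0^n)_\theta$ for some $\theta \in Z^2(\mu_0^n,\mathbb{C})$ with $\Ann(\theta)\cap\Ann(\mu_0^n) = 0$. The content of the theorem is that any such $\mathbf{B}$ which is a genuine (non-split) central extension is isomorphic to $\mu_0^{n+1}$. So the steps are: (1) write a general $\theta = \sum_{i=2}^{n+1} c_i \left(\sum_{j=1}^{i-1}\Delta_{j,i-j}\right)$ using the description of $Z^2$; (2) observe that modulo $B^2$ we may assume $c_2 = \dots = c_n = 0$, so the class of $\theta$ is $c_{n+1}\,[\sum_{j=1}^{n}\Delta_{j,n+1-j}]$; (3) for the extension to be non-split, Lemma on annihilator components forces $[\theta]\neq 0$, i.e. $c_{n+1}\neq 0$; (4) apply the automorphism $\phi_0^n$ with a suitable choice of $a_{1,1}$ to rescale $c_{n+1}$ to $1$, using that $\phi_0^n\theta$ multiplies the top cocycle by $a_{1,1}^{n+1}$ and that $\mathbb{C}$ is algebraically closed; (5) identify $(\mu_0^n)_\theta$ with $\theta = \sum_{j=1}^n \Delta_{j,n+1-j}$ explicitly: the new basis vector $e_{n+1}$ spanning $\mathbb{V}$ satisfies $e_i e_j = e_{i+j}$ for $i+j = n+1$ as well, which is exactly the multiplication table of $\mu_0^{n+1}$.

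The main obstacle — really the only non-formal point — is Step (2)/(5): verifying that after the coboundary reduction and rescaling, the resulting algebra structure on $\mu_0^n \oplus \mathbb{C}e_{n+1}$ literally matches $\mu_0^{n+1}$, i.e. that $\theta(e_i,e_j) = e_{n+1}$ whenever $i+j=n+1$ and $0$ otherwise, and that this is associative (which is automatic since $\theta\in Z^2$). One must also double-check that we are not in the split case: if $c_{n+1}=0$ then $[\theta]=0$, $\mathbf{B}$ has an annihilator component, and $\mathbf{B}\cong\mu_0^n\oplus\mathbb{C}$ — excluded from $E(\mu_0^n,\mathbb{C})$ by definition. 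I expect the whole argument to be short: it is essentially the observation that $G_1$ of a one-dimensional space is a point, that $\Aut(\mu_0^n)$ acts transitively on the nonzero scalars via the $a_{1,1}^{n+1}$ weight, and that the unique resulting algebra is $\mu_0^{n+1}$ by inspection of the product.
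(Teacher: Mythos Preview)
Your second-paragraph approach (steps (1)--(5)) is correct and is exactly what the paper does; the paper compresses the entire argument to the single observation that $\dim H^2(\mu_0^n,\mathbb{C})=1$. One correction to your aborted first paragraph: for $\theta=\sum_{j=1}^{n}\Delta_{j,n+1-j}$ we have $\theta(e_k,e_{n+1-k})=1$ for \emph{every} $k=1,\dots,n$, so in fact $\Ann(\theta)=0$ (in particular $e_n\notin\Ann(\theta)$ because $\theta(e_n,e_1)=1$), and the $T_1$ condition $\Ann(\theta)\cap\Ann(\mu_0^n)=0$ holds trivially---there was no obstacle to resolve.
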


\section{Central extension of filiform associative algebras}\label{S:fil}

\begin{proposition} Let $\mu_{1,1}^n,\mu_{1,2}^n,\mu_{1,3}^n$ and $\mu_{1,4}^n$ be $n$-dimensional complex filiform associative algebras. Then:

\begin{itemize}
  \item A basis of $Z^2(\mu_{1,s}^n,\mathbb{C})$ is formed by the following cocycles
\[\begin{array}{l}
  Z^2(\mu_{1,1}^n,\mathbb{C})=\langle\sum\limits_{j=1}^{i-1}\Delta_{j,i-j}, \Delta_{1,n},\Delta_{n,1},\Delta_{n,n}, \ 2\leq i\leq n\rangle, \\
  Z^2(\mu_{1,k}^n,\mathbb{C})=\langle\sum\limits_{j=1}^{i-1}\Delta_{j,i-j}, \Delta_{1,n},\Delta_{n,1},\Delta_{n,n}, \ 2\leq i\leq n-1\rangle, \quad 2\leq k\leq4;
\end{array}\]
  \item A basis of $B^2(\mu_{1,s}^n,\mathbb{C})$  is formed by the following coboundaries
\[\begin{array}{l}
  B^2(\mu_{1,1}^n,\mathbb{C})=\langle\sum\limits_{j=1}^{i-1}\Delta_{j,i-j}, \ 2\leq i\leq n-1\rangle, \\
  B^2(\mu_{1,2}^n,\mathbb{C})=\langle\sum\limits_{j=1}^{i-1}\Delta_{j,i-j}, \sum\limits_{j=1}^{n-2}\Delta_{j,n-j-1}+\Delta_{n,n}, \ 2\leq i\leq n-2\rangle,\\
  B^2(\mu_{1,3}^n,\mathbb{C})=\langle\sum\limits_{j=1}^{i-1}\Delta_{j,i-j}, \sum\limits_{j=1}^{n-2}\Delta_{j,n-j-1}+\Delta_{1,n}, \ 2\leq i\leq n-2\rangle,\\
  B^2(\mu_{1,4}^n,\mathbb{C})=\langle\sum\limits_{j=1}^{i-1}\Delta_{j,i-j}, \sum\limits_{j=1}^{n-2}\Delta_{j,n-j-1}+\Delta_{1,n}+\Delta_{n,n}, \ 2\leq i\leq n-2\rangle;
\end{array}\]
  \item A basis of $H^2(\mu_{1,s}^n,\mathbb{C})$ is formed by the following cocycles
\[\begin{array}{l}
  H^2(\mu_{1,1}^n,\mathbb{C})=\langle [\sum\limits_{j=1}^{n-1}\Delta_{j,n-j}], [\Delta_{1,n}], [\Delta_{n,1}], [\Delta_{n,n}]\rangle, \\
  H^2(\mu_{1,k}^n,\mathbb{C})=\langle [\Delta_{1,n}],[\Delta_{n,1}],[\Delta_{n,n}] \rangle, \quad 2\leq k\leq4.
\end{array}\]
\end{itemize}
\end{proposition}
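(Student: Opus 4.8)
The plan is to compute $Z^2$, $B^2$ and $H^2$ directly from the definitions, treating the four algebras $\mu_{1,s}^n$ in parallel since they share the common multiplication table $e_ie_j=e_{i+j}$ (for $2\le i+j\le n-1$ in cases $s=2,3,4$, and $2\le i+j\le n$ in case $s=1$) plus at most one extra product involving $e_n$. First I would write an arbitrary bilinear form as $\theta=\sum_{1\le i,j\le n}c_{ij}\Delta_{i,j}$ and impose the cocycle identity $\theta(e_pe_q,e_r)=\theta(e_p,e_qe_r)$ for all basis triples $(e_p,e_q,e_r)$. The key observation is that $e_pe_q$ is either $e_{p+q}$ (when $p+q$ is in the admissible range), $e_{n-1}$ (for the one special product, e.g.\ $e_ne_n$ or $e_1e_n$), or $0$; so each triple produces a linear relation among the $c_{ij}$. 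Running through the cases $p,q,r\in\{1,\dots,n\}$ and bookkeeping which products land inside the ``generic'' part $e_{i+j}$ versus hit $e_{n-1}$ versus vanish yields, after a finite but somewhat tedious elimination, that the surviving free parameters are exactly the coefficient of the ``diagonal'' cocycles $\sum_{j=1}^{i-1}\Delta_{j,i-j}$ together with $c_{1,n}$, $c_{n,1}$, $c_{n,n}$, giving the stated bases of $Z^2$.

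Next I would compute $B^2$. For a linear map $f\colon\mathbf{A}\to\mathbb C$ with $f(e_k)=\gamma_k$, the coboundary is $\delta f(e_i,e_j)=f(e_ie_j)$, which equals $\gamma_{i+j}\sum$-type terms from the generic products plus a contribution $\gamma_{n-1}$ on the slot corresponding to the special product. Concretely, $\delta f=\sum_k \gamma_k\bigl(\sum_{i+j=k}\Delta_{i,j}\bigr)$ where the inner sum ranges over admissible $(i,j)$; in case $s=1$ this gives $\langle\sum_{j=1}^{i-1}\Delta_{j,i-j}:2\le i\le n-1\rangle$ (the top one, $i=n$, being killed because $e_1\cdots$ products reaching $e_n$ force $\gamma_n$ to appear only in a combination already counted — more precisely $\dim B^2=n-1$ since $f$ kills $e_1$ effectively, so one checks the rank), and in cases $s=2,3,4$ the term $\gamma_{n-1}$ couples the generic level-$(n-1)$ cocycle to the extra $\Delta_{n,n}$, $\Delta_{1,n}$, or $\Delta_{1,n}+\Delta_{n,n}$ slot, producing exactly the displayed ``mixed'' coboundary. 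The quotient $H^2=Z^2/B^2$ is then read off: in case $s=1$ the class $[\sum_{j=1}^{n-1}\Delta_{j,n-j}]$ survives along with $[\Delta_{1,n}],[\Delta_{n,1}],[\Delta_{n,n}]$, while in cases $s=2,3,4$ every diagonal cocycle is a coboundary (the top one because it is now identified, modulo $B^2$, with a multiple of the special slot, which is itself adjusted) so only $[\Delta_{1,n}],[\Delta_{n,1}],[\Delta_{n,n}]$ remain.

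I expect the main obstacle to be the cocycle computation for $Z^2$: the identity $\theta(xy,z)=\theta(x,yz)$ must be checked on all $n^3$ triples, and one has to organize the casework carefully according to whether indices sum past the truncation point $n-1$ (resp.\ $n$) and whether the index $n$ is involved, since the special products $e_ne_n$, $e_1e_n$ behave differently from the generic ones and can interact with the associativity constraint in ways that are easy to mishandle at the boundary $i+j=n-1$. A clean way to manage this is to first restrict attention to the subalgebra spanned by $e_1,\dots,e_{n-1}$ (which is $\mu_0^{n-1}$, already handled in the previous proposition, giving the diagonal cocycles on that block), then analyze separately the constraints coming from triples that involve $e_n$ in one of the three positions, showing these force all coefficients $c_{i,n}$ and $c_{n,j}$ with $(i,j)\notin\{(1,n),(n,1),(n,n)\}$ to vanish and the remaining ones to be free. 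The $B^2$ and $H^2$ parts are then essentially linear algebra over the explicit generators, and the only subtlety is verifying the precise form of the mixed coboundary in each of the three non-trivial cases, which follows by evaluating $\delta f$ on the pair realizing the special product.
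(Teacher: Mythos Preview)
Your approach is exactly what the paper does: its entire proof is the single sentence ``The proof follows directly from the definition of a cocycle,'' and your plan is precisely to unwind that definition on basis triples, compute $\delta f$ explicitly, and take the quotient. So strategically there is nothing to compare.

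Two small slips to fix before you execute the plan, however. First, in $\mu_{1,1}^n$ the generic products are also only for $2\le i+j\le n-1$ (it is $\mu_0^{n-1}\oplus\mathbb{C}$, with $e_n$ annihilating everything), not $2\le i+j\le n$; this matters because it is exactly why the top diagonal cocycle $\sum_{j=1}^{n-1}\Delta_{j,n-j}$ survives in $Z^2(\mu_{1,1}^n)$ but is killed in the other three cases (the extra product forces, e.g., $\theta(e_ne_n,e_1)=\theta(e_{n-1},e_1)\neq 0=\theta(e_n,e_ne_1)$). Second, $\dim B^2=n-2$ in every case, not $n-1$: the kernel of $f\mapsto\delta f$ consists of the linear forms vanishing on $\mathbf{A}^2=\langle e_2,\dots,e_{n-1}\rangle$, which is $2$-dimensional. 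With these corrections your outline goes through without further difficulty.
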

\begin{proof}
The proof follows directly from the definition of a cocycle.
\end{proof}

\begin{proposition}
 Let $\phi_{1,s}^n\in \Aut(\mu_{1,s}^n)$. Then

 \begin{align*}
\phi_{1,1}^n= \begin{pmatrix}
       &  &  &  &  & 0 \\
       &  &  &  &  & 0 \\
       &  & \phi_0^{n-1} &  &  & \vdots \\
       &  &  &  &  & 0 \\
       &  &  &  &  & a_{n-1,n} \\
      a_{n,1} & 0 & \dots & 0 & 0 & a_{n,n} \\
    \end{pmatrix}, & \ \phi_{1,2}^n=
    \begin{pmatrix}
       &  &  &  &  & 0 \\
       &  &  &  &  & \vdots \\
       &  & \phi_0^{n-1} &  &  & 0 \\
       &  &  &  &  & -a_{n,1}a_{1,1}^{(n-3)/2} \\
       &  &  &  &  & a_{n-1,n} \\
      a_{n,1} & 0 & \dots & 0 & 0 & a_{1,1}^{(n-1)/2} \\
    \end{pmatrix} \\[2mm]
    \phi_{1,3}^n=
    \begin{pmatrix}
       &  &  &  &  & 0 \\
       &  &  &  &  & 0 \\
       &  & \phi_0^{n-1} &  &  & \vdots \\
       &  &  &  &  & 0 \\
       &  &  &  &  & a_{n-1,n} \\
      a_{n,1} & 0 & \dots & 0 & 0 & a_{1,1}^{n-2} \\
    \end{pmatrix}, &  \ \phi_{1,4}^n=
    \begin{pmatrix}
       &  &  &  &  & 0 \\
       &  &  &  &  & \vdots \\
       &  & \phi_0^{n-1}(1) &  &  & 0 \\
       &  &  &  &  & -a_{n,1} \\
       &  &  &  &  & a_{n-1,n} \\
      a_{n,1} & 0 & \dots & 0 & 0 & 1 \\
    \end{pmatrix}
\end{align*}
where $\phi_0^{n-1}(1)$ is $\phi_0^{n-1}$ with $a_{1,1}=1$.
\end{proposition}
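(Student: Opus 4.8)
The plan is to write a prospective automorphism $\phi$ in coordinates, $\phi(e_j)=\sum_{i=1}^{n}a_{ij}e_i$, to turn the condition $\phi(xy)=\phi(x)\phi(y)$ into equations on the entries $a_{ij}$, and to solve them; conversely, by exactly the same bookkeeping one checks that every matrix of the stated shape is an automorphism (invertibility being immediate from the triangular pattern). The equations coming from the products $e_ie_j=e_{i+j}$ with $2\le i+j\le n-1$ are common to the four algebras; the equations coming from the exceptional products $e_ne_n=e_{n-1}$ and $e_1e_n=e_{n-1}$ are what separate $\mu_{1,2}^n,\mu_{1,3}^n,\mu_{1,4}^n$ from $\mu_{1,1}^n$.

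First I would record the relevant invariants. In each of the four algebras one has $\mathbf{A}^m=\langle e_m,\dots,e_{n-1}\rangle$ for $2\le m\le n-1$, the quotient $\mathbf{A}/\mathbf{A}^2$ is two-dimensional with basis $\overline{e_1},\overline{e_n}$, $\Ann(\mu_{1,1}^n)=\langle e_{n-1},e_n\rangle$ while $\Ann(\mu_{1,k}^n)=\langle e_{n-1}\rangle$ for $k=2,3,4$, and the induced map $\mathbf{A}/\mathbf{A}^2\times\mathbf{A}/\mathbf{A}^2\to\mathbf{A}^2/\mathbf{A}^3$ is nonzero only on the pair $(\overline{e_1},\overline{e_1})$. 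Since $\phi$ preserves all of these, I obtain at once: for $2\le m\le n-1$ the $m$-th column of $\phi$ has nonzero entries only in rows $m,\dots,n-1$ (so $\phi(e_{n-1})=a_{n-1,n-1}e_{n-1}$); $\phi(e_1)=a_{1,1}e_1+a_{n,1}e_n+v$ with $v\in\mathbf{A}^2$; $\phi(e_n)$ has no $e_1$-component; and, only in the case $\mu_{1,1}^n$, $\phi(e_n)\in\langle e_{n-1},e_n\rangle$. Evaluating $\phi$ on $e_1^{\,k}=e_k$, $2\le k\le n-1$, then forces $a_{k,k}=a_{1,1}^{\,k}$ and makes the upper-left $(n-1)\times(n-1)$ block have the shape $\phi_0^{n-1}$ of the previous proposition, the entries below its diagonal being governed by $a_{2,1},\dots,a_{n-1,1}$ and, in the cases with $e_ne_n=e_{n-1}$, also by $a_{n,1}$ --- which is why $a_{n,1}$ remains a free parameter. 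For $\mu_{1,1}^n$ this already finishes the argument: $e_n$ spans an annihilator component, so $\phi(e_n)=a_{n-1,n}e_{n-1}+a_{n,n}e_n$ with $a_{n,n}$ free.

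The crux is the last column $\phi(e_n)=\sum_i a_{i,n}e_i$ in the three remaining cases. Here I would impose the equations $\phi(e_n)\phi(e_n)=\phi(e_ne_n)$, $\phi(e_1)\phi(e_n)=\phi(e_1e_n)$ and $\phi(e_n)\phi(e_1)=\phi(e_ne_1)$, and compare coefficients, sweeping the index upward. For $\mu_{1,3}^n$ (where $e_ne_n=0$, $e_1e_n=e_{n-1}$, $e_ne_1=0$): $\phi(e_n)\phi(e_1)=0$ kills $a_{i,n}$ for all $i\le n-2$, leaving $\phi(e_n)=a_{n-1,n}e_{n-1}+a_{n,n}e_n$, after which $\phi(e_1)\phi(e_n)=\phi(e_{n-1})=a_{1,1}^{\,n-1}e_{n-1}$ gives $a_{n,n}=a_{1,1}^{\,n-2}$. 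For $\mu_{1,2}^n$ (where $e_ne_n=e_{n-1}$, $e_1e_n=e_ne_1=0$): the equations force $a_{i,n}=0$ for $i\le n-3$, while $\phi(e_n)^2=\phi(e_{n-1})$ gives $a_{n,n}^{\,2}=a_{1,1}^{\,n-1}$ and $\phi(e_1)\phi(e_n)=0$ gives $a_{n-2,n}=-a_{n,1}a_{n,n}a_{1,1}^{-1}=-a_{n,1}a_{1,1}^{(n-3)/2}$ (for even $n$, $a_{1,1}^{(n-1)/2}$ is understood as a fixed square root of $a_{1,1}^{\,n-1}$). For $\mu_{1,4}^n$ both exceptional relations are active: $\phi(e_n)^2=\phi(e_{n-1})$ gives $a_{n,n}^{\,2}=a_{1,1}^{\,n-1}$, $\phi(e_n)\phi(e_1)=0$ gives $a_{n-2,n}=-a_{n,1}a_{n,n}a_{1,1}^{-1}$, and $\phi(e_1)\phi(e_n)=\phi(e_{n-1})$ gives $a_{n,n}=a_{1,1}^{\,n-2}$; combining these yields $a_{1,1}^{\,n-3}=1$, and recording the representative $a_{1,1}=1$ as in the statement turns the upper-left block into $\phi_0^{n-1}(1)$ and the last column into $(0,\dots,0,-a_{n,1},a_{n-1,n},1)$.

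The step I expect to be the real obstacle is exactly this last-column computation. The delicate point is that, in each of the three cases, several distinct products --- $e_{n-2}e_1$, $e_ne_n$, and for $\mu_{1,4}^n$ also $e_1e_n$ --- feed into the coefficient of $e_{n-1}$, so a careless comparison will spuriously conclude $a_{n,1}=0$; keeping all the contributions shows instead that they force the nonzero entry in position $(n-2,n)$, which precisely absorbs the apparently forbidden term. Once this is organized correctly, checking the converse (that each displayed matrix is a homomorphism) is the same computation run in reverse, and the proposition follows.
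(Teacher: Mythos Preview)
The paper states this proposition without proof, so there is nothing to compare against; your direct coordinate computation --- impose $\phi(e_ie_j)=\phi(e_i)\phi(e_j)$ on the generating products, use preservation of the powers $\mathbf{A}^m$ and of the annihilator to pin down the block shape, and then read off the last column from the exceptional products --- is exactly the expected argument and is correct in each of the four cases. Your handling of the last column is careful, and your remark that several products feed simultaneously into the $e_{n-1}$ coefficient (so that one must not prematurely conclude $a_{n,1}=0$) correctly identifies the only genuinely delicate step.

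One point worth recording: for $\mu_{1,4}^n$ your computation yields $a_{1,1}^{\,n-3}=1$, which for $n\ge 5$ is strictly weaker than the $a_{1,1}=1$ appearing in the paper's displayed matrix. You are right: the automorphism group in fact contains the extra cyclic factor of $(n-3)$-th roots of unity, and the paper's description omits it. This omission is harmless for the subsequent orbit analysis in the paper, since on each one-dimensional subspace $\langle\theta\rangle\subset H^2(\mu_{1,4}^n,\mathbb{C})$ that cyclic factor acts by an overall scalar and hence does not change the orbit decomposition; but your derivation is the more accurate one.
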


\subsection{Central extensions of $\mu_{1,1}^n$}

Let us denote
\[\nabla_1=\sum\limits_{j=1}^{n-1}[\Delta_{j,n-j}], \   \nabla_2= [\Delta_{1,n}], \ \nabla_3= [\Delta_{n,1}], \  \nabla_4= [\Delta_{n,n}]\in H^2(\mu_{1,1}^n,\mathbb{C})\]
and $x=a_{1,1},y=a_{n,n}, z=a_{n-1,n},w=a_{n,1}$.
Since \[\left(
  \begin{array}{ccccc}
    \ast & \dots & \ast & \alpha_1^\prime & \alpha_2^\prime \\
    \ast & \dots & \alpha_1^\prime & 0 & 0 \\
    \vdots & \ldots & \vdots & \vdots & \vdots \\
    \alpha_1^\prime & \dots & 0 & 0 & 0 \\
    \alpha_3^\prime & \dots & 0 & 0 & \alpha_4^\prime \\
  \end{array}
\right)=(\phi_{1,1}^n)^T\left(
  \begin{array}{ccccc}
    0 & \dots & 0 & \alpha_1 & \alpha_2 \\
    0 & \dots & \alpha_1 & 0 & 0 \\
    \vdots & \ldots & \vdots & \vdots & \vdots \\
    \alpha_1 & \dots & 0 & 0 & 0 \\
    \alpha_3 & \dots & 0 & 0 & \alpha_4 \\
  \end{array}
\right)\phi_{1,1}^n,\]
for any
$\theta=\alpha_1 \nabla_1 +\alpha_2 \nabla_2 +\alpha_3 \nabla_3+\alpha_4 \nabla_4$, we have
the action of the automorphism group on the subspace $\langle \theta \rangle$ as
\[\langle
\alpha_1 x^n \nabla_1 + (\alpha_2 xy+\alpha_1 xz+\alpha_4 wy) \nabla_2 +
                        (\alpha_3 xy+\alpha_1 xz+\alpha_4 wy) \nabla_3+  \alpha_4 y^2 \nabla_4\rangle.\]

\subsubsection{$1$-dimensional central extensions of $\mu_{1,1}^n$}
Let us consider the following cases:

\begin{enumerate}
    \item[(a)] $\alpha_1 =0$.
    \begin{enumerate}
        \item[(1)] $\alpha_4=0, \alpha_2=0, \alpha_3 \neq 0$.
        Choosing $x= 1/ \alpha_3, y=1$, we have a representative $\nabla_3$ and the orbit is $\langle \nabla_3 \rangle$.

        \item[(2)] $\alpha_4=0, \alpha_2\neq0$.
        Choosing $x= 1/ \alpha_2, y=1$ and $\alpha= \alpha_3 / \alpha_2$, we have a representative $\nabla_2+\alpha \nabla_3$ and the orbit is $\langle \nabla_2+\alpha \nabla_3 \rangle$.

        \item[(3)] $\alpha_4\neq 0, \alpha_2=\alpha_3$.
        Choosing $y=1/\sqrt{\alpha_4}, w= - \frac{\alpha_2}{\alpha_4}, x=1$, we have a representative $ \nabla_4$ and the orbit is $\langle \nabla_4 \rangle$.

        \item[(4)] $\alpha_4\neq 0, \alpha_2 \neq \alpha_3$.
        Choosing
        $x=\frac{\sqrt{\alpha_4}}{\alpha_2-\alpha_3}, y=\frac{1}{\sqrt{\alpha_4}},w=  \frac{\alpha_3}{\sqrt{\alpha_4}(\alpha_3-\alpha_2)}$,
        we have a representative $\nabla_3+\nabla_4$ and the orbit is $\langle \nabla_3+ \nabla_4 \rangle$.
    \end{enumerate}

    \item[(b)] $\alpha_1 \neq 0$.

\begin{enumerate}
        \item[(1)] $\alpha_4=0, \alpha_2=\alpha_3$.
        Choosing $x =  \frac{1}{\sqrt[n]{\alpha_1}}, y=1, z= - \frac{\alpha_2}{\alpha_1}$, we have a representative $\nabla_1$ and the orbit is
        $\langle \nabla_1 \rangle$.

        \item[(2)] $\alpha_4=0, \alpha_2\neq \alpha_3$.
        Choosing $x= \frac{1}{\sqrt[n]{\alpha_1}}, y=\frac{\sqrt[n]{\alpha_1}}{\alpha_3-\alpha_2}, z=-\frac{\alpha_2}{\alpha_1}$,
        we have a representative $\nabla_1+ \nabla_3$ and the orbit is $\langle \nabla_1+ \nabla_3 \rangle$.

        \item[(3)] $\alpha_4\neq 0, \alpha_2=\alpha_3$.
        Choosing $x =  \frac{1}{\sqrt[n]{\alpha_1}}, y=\frac{1}{\sqrt{\alpha_4}},
        z= - \frac{\alpha_3}{\alpha_1 \sqrt{\alpha_4}}, w=0$, we have a representative $\nabla_1+\nabla_4$ and the orbit is $\langle \nabla_1 +\nabla_4\rangle$.

        \item[(4)] $\alpha_4\neq 0, \alpha_2 \neq \alpha_3$.
        Choosing
        $ x =  \sqrt[\leftroot{-3}\uproot{3}n-2]{\frac{(\alpha_3-\alpha_2)^2}{\alpha_1\alpha_4}}$,
          $y= \frac{\alpha_3-\alpha_2}{\alpha_4}\sqrt[\leftroot{-3}\uproot{3}n-2]{\frac{(\alpha_3-\alpha_2)^2}{\alpha_1\alpha_4}}$,
          $z= -\frac{\alpha_2(\alpha_3-\alpha_2)}{\alpha_1\alpha_4}\sqrt[\leftroot{-3}\uproot{3}n-2]{\frac{(\alpha_3-\alpha_2)^2}{\alpha_1\alpha_4}}$,
         $ w=0 $,
        we have a representative
         \[ \alpha_4 \bigg(\frac{\alpha_3-\alpha_2}{\alpha_4}\sqrt[\leftroot{-3}\uproot{3}n-2]{\frac{(\alpha_3-\alpha_2)^2}{\alpha_1\alpha_4}}\bigg)^2(\nabla_1+  \nabla_3+\nabla_4) \]
         and the orbit is $\langle \nabla_1+  \nabla_3+\nabla_4 \rangle$.
    \end{enumerate}
\end{enumerate}

It is easy to verify that all previous orbits  are different, and so we obtain
\begin{align*}
T_1(\mu_{1,1}^n)&=
\Big \langle \nabla_1 \Big \rangle \cup
\Big \langle \nabla_1+ \nabla_3 \Big \rangle \cup
\Big \langle \nabla_1+ \nabla_3+\nabla_4 \Big \rangle \cup
\Big \langle \nabla_1+ \nabla_4 \Big \rangle  \\
&  \qquad  \! \! \qquad \cup \Big \langle \nabla_2+ \alpha \nabla_3 \Big \rangle \cup
\Big \langle \nabla_3  \Big \rangle \cup
\Big \langle \nabla_3+ \nabla_4 \Big \rangle \cup
\Big \langle \nabla_4  \Big \rangle.
\end{align*}

\subsubsection{$2$-dimensional central extensions of $\mu_{1,1}^n$}
We may assume that a $2$-dimensional subspace is generated by
\begin{align*}
\theta_1 & = \alpha_1 \nabla_1+ \alpha_2 \nabla_2+\alpha_3 \nabla_3 +\alpha_4 \nabla_4,\\
\theta_2 & = \beta_1 \nabla_1+ \beta_2 \nabla_2 +\beta_3 \nabla_3.
\end{align*}

Then we have the two following cases:

(a) $\alpha_4 \neq 0$.
    \begin{enumerate}
        \item[(1)] $\alpha_1=0, \beta_1\neq0, \alpha_2=\alpha_3, \beta_2=\beta_3$.
   Choosing $x =  \frac{1}{\sqrt[n]{\beta_1}}$, $y=\frac{1}{\sqrt{\alpha_4}}$,
   $z= - \frac{\beta_2}{\beta_1 \sqrt{\alpha_4}}$, $w= -\frac{\alpha_2}{\alpha_4\sqrt[n]{\beta_1}}$, we have a representative
   $\{ \nabla_4, \nabla_1 \}$ and the orbit is $\langle \nabla_1, \nabla_4 \rangle$.

        \item[(2)] $\alpha_1=0, \beta_1=0, \alpha_2=\alpha_3, \beta_2=\beta_3$.
We have a representative
   $\{ \alpha_2(\nabla_2+\nabla_3) + \alpha_4\nabla_4, \beta_2(\nabla_2+\nabla_3) \}$ and the orbit is $\langle \nabla_2+\nabla_3, \nabla_4 \rangle$.

        \item[(3)] $\alpha_1=0, \beta_1\neq0, \alpha_2 \neq \alpha_3, \beta_2=\beta_3$.
   Choosing $y=\frac{\alpha_3-\alpha_2}{\alpha_4}x, z=-\frac{\beta_2(\alpha_3-\alpha_2)}{\beta_1\alpha_4}x$, $w=-\frac{\beta_2}{\beta_4}x$, we have a representative
   $\{ \alpha_4y^2(\nabla_3+\nabla_4), \nabla_1 \}$ and the orbit is $\langle \nabla_1, \nabla_3+\nabla_4 \rangle$.

        \item[(4)] $\alpha_1=0, \beta_1=0, \alpha_2 \neq \alpha_3, \beta_2=\beta_3$.
   Choosing $y =  \frac{(\alpha_3-\alpha_2)}{\alpha_4}x,
             w=   -\frac{\alpha_2}{\alpha_4}x$, we have a representative
   $\{ \alpha_4 y^2(\nabla_3+\nabla_4), \beta_2xy(\nabla_2+\nabla_3) \}$ and the orbit is
   $\langle \nabla_2+\nabla_3, \nabla_3+\nabla_4 \rangle$.

        \item[(5)] $\alpha_1=0, \beta_1=0, \alpha_2 = \alpha_3, \beta_2=0,  \beta_3\neq0$.
   Choosing $x=1, y=\frac{1}{\sqrt{\alpha_4}}, w = -\frac{\alpha_2}{\alpha_4}$, we have a representative
   $\{ \nabla_4, \nabla_3 \}$ and the orbit is
   $\langle \nabla_3, \nabla_4 \rangle$.

        \item[(6)] $\alpha_1=0, \beta_1=0, \alpha_2 = \alpha_3, \beta_2\neq 0, \beta_2 \neq \beta_3$.
   Choosing $w =  -\frac{\alpha_2}{\alpha_4}x$ and $\alpha=\frac{\beta_3}{\beta_2}$, we have a representative
   $\{ \alpha_4y^2\nabla_4, \beta_2xy(\nabla_2 +\alpha  \nabla_3) \}_{\alpha\neq 1}$ and the orbit is
   $\langle \nabla_2 +\alpha  \nabla_3, \nabla_4 \rangle_{\alpha\neq 1}$.

        \item[(7)] $\alpha_1=0, \beta_1\neq 0, \alpha_2 = \alpha_3, \beta_2 \neq \beta_3$.
   Choosing $y=\frac{\beta_1}{\beta_2-\beta_3}x^{n-1},
   z=-\frac{\beta_3}{\beta_1}x,
   w=-\frac{\alpha_2}{\alpha_4}x$,  we have a representative
   $\{ \alpha_4y^2\nabla_4, \beta_1x^n(\nabla_1 +\nabla_2) \}$ and the orbit is
   $\langle \nabla_1 +  \nabla_2, \nabla_4 \rangle$.

        \item[(8)] $\alpha_1=0, \beta_1= 0, \alpha_2 \neq \alpha_3, \beta_2 \neq \beta_3$.
        We can choose a representative as in the case (a.5) or (a.6).

        \item[(9)] $\alpha_1=0, \beta_1\neq 0, \alpha_2 \neq \alpha_3, \beta_2 \neq \beta_3$.
   Choosing $x =  \sqrt[\leftroot{-3}\uproot{3}n-2]{\frac{(\beta_2-\beta_3)(\alpha_3-\alpha_2)}{\alpha_4\beta_1}},
             y=   \frac{\alpha_3-\alpha_2}{\alpha_4}x,
   z= -\frac{\beta_3(\alpha_3-\alpha_2)}{\beta_1\alpha_4}x,
   w= -\frac{\alpha_4}{\alpha_2}x$,  we have a representative
   $\{ \alpha_4y^2(\nabla_3+\nabla_4), \beta_1x^n(\nabla_1 +\nabla_2) \}$ and the orbit is
   $\langle \nabla_1 +  \nabla_2, \nabla_3+\nabla_4 \rangle$.

        \item[(10)] $\alpha_1\neq 0, \beta_1= 0, \alpha_2 = \alpha_3, \beta_2= 0,  \beta_3\neq 0$.
   Choosing $x =  \frac{1}{\sqrt[n]{\alpha_1}} ,
             y=   \frac{1}{\sqrt{\alpha_4}},
             z= -\frac{\alpha_2}{\alpha_4 \sqrt{\alpha_4}}, w= 0$,  we have a representative
   $\{ \nabla_1+ \nabla_4, \beta_3xy \nabla_3 \}$ and the orbit is
   $\langle \nabla_1 +  \nabla_4,  \nabla_3 \rangle$.

        \item[(11)] $\alpha_1\neq 0, \beta_1= 0, \alpha_2 = \alpha_3, \beta_2\neq 0$.
   Choosing $x =  \frac{1}{\sqrt[n]{\alpha_1}}$ ,
             $y=   \frac{1}{\sqrt{\alpha_4}}$,
             $z= -\frac{\alpha_2}{\alpha_4 \sqrt{\alpha_4}}$, $w= 0$  and $\alpha=\frac{\beta_3}{\beta_2}$,
             we have a representative
   $\{ \nabla_1+ \nabla_4, \beta_2xy (\nabla_2 +\alpha \nabla_3) \}$ and the orbit is
   $\langle \nabla_1 +  \nabla_4,  \nabla_2 +\alpha\nabla_3 \rangle$.

        \item[(12)] $\alpha_1\neq 0, \beta_1= 0, \alpha_2 \neq \alpha_3, \beta_2=\beta_3$.

   Choosing $x =  \sqrt[\leftroot{-3}\uproot{3}n-2]{\frac{(\alpha_2-\alpha_3)^2}{(\alpha_n)^2}}$,
             $y=   \frac{\alpha_2-\alpha_3}{\alpha_4}\sqrt[\leftroot{-3}\uproot{3}n-2]{\frac{(\alpha_2-\alpha_3)^2}{(\alpha_4)^2}}$,
            $ z=  0$,
            $ w=   -\frac{\alpha_3}{\alpha_4}x$,
             we have a representative
   $\{ \alpha_1x^n(\nabla_1+ \nabla_2+\nabla_4), \beta_2xy (\nabla_2 +\nabla_3) \}$ and the orbit is
   $\langle \nabla_1+ \nabla_2+\nabla_4, \nabla_2 + \nabla_3 \rangle$.

     \item[(13)] $\alpha_1\neq 0, \beta_1= 0, \alpha_2 \neq \alpha_3, \beta_2 \neq \beta_3$.
           We can choose a representative as in the case (a.1) or (a.11).
    \end{enumerate}

   (b) $\alpha_4 = 0, \beta_3=0$.
\begin{enumerate}

         \item[(1)] $\alpha_1\neq 0, \beta_1= 0, \alpha_2 = \alpha_3$.
   It is easy to see that the orbit is $\langle \nabla_1, \nabla_3 \rangle$
   or          $\langle \nabla_1, \nabla_2+ \alpha \nabla_3 \rangle$.

         \item[(2)] $\alpha_1\neq 0, \beta_1= 0, \alpha_2 \neq \alpha_3, \beta_2 \neq \beta_3$.
   It is easy to see that we have the case (b.1).

         \item[(3)] $\alpha_1\neq 0, \beta_1= 0, \alpha_2 \neq \alpha_3, \beta_2=\beta_3$.
   It is easy to see that the orbit is $\langle \nabla_1+\nabla_2 , \nabla_2+\nabla_3 \rangle$.

        \item[(4)] $\alpha_1= 0, \beta_1= 0$.
   It is easy to see that the orbit is $\langle \nabla_2, \nabla_3 \rangle$.

        \item[(5)] $\alpha_3= 0$.
   It is easy to see that we have the case (b.1).

\end{enumerate}

It is easy to verify that all previous orbits   are different, and so we obtain
\begin{align*}
T_2(\mu_{1,1}^n) & =
\Big \langle \nabla_1, \nabla_2+\alpha \nabla_3 \Big \rangle  \cup
\Big \langle \nabla_1, \nabla_3 \Big \rangle  \cup
\Big \langle \nabla_1, \nabla_3+\nabla_4 \Big \rangle  \cup
\Big \langle \nabla_1, \nabla_4 \Big \rangle  \\
& \quad \cup
\Big \langle \nabla_1+ \nabla_2, \nabla_2+\nabla_3 \Big \rangle  \cup
\Big \langle \nabla_1+ \nabla_2, \nabla_3+\nabla_4 \Big \rangle  \cup
\Big \langle \nabla_1+ \nabla_2, \nabla_4 \Big \rangle    \\
 &  \quad \cup  \Big \langle \nabla_1+ \nabla_2+\nabla_4, \nabla_2 + \nabla_3 \Big \rangle  \cup
\Big \langle \nabla_1+ \nabla_4, \nabla_2 +\alpha\nabla_3 \Big \rangle  \cup
\Big \langle \nabla_1+ \nabla_4, \nabla_3 \Big \rangle     \\
& \quad \cup \Big \langle \nabla_2, \nabla_3 \Big \rangle \cup
\Big \langle \nabla_2+ \nabla_3, \nabla_3+\nabla_4 \Big \rangle  \cup
\Big \langle \nabla_2 +\alpha  \nabla_3, \nabla_4 \Big \rangle  \cup
\Big \langle \nabla_3, \nabla_4 \Big \rangle.
\end{align*}

\subsubsection{$3$-dimensional central extensions of $\mu_{1,1}^n$}
We may assume that a $3$-dimensional subspace is generated by
\begin{align*}
\theta_1 &= \alpha_1 \nabla_1+ \alpha_2 \nabla_2+\alpha_3 \nabla_3 +\alpha_4 \nabla_4,\\
\theta_2 &= \beta_1 \nabla_1+ \beta_2 \nabla_2 +\beta_3 \nabla_3,\\
\theta_3 & = \gamma_1 \nabla_1+ \gamma_2 \nabla_2.
\end{align*}

Then we have the  following cases:

\; \;

    (a) $\alpha_4 \neq 0, \beta_3\neq 0, \gamma_2 \neq 0$.
\begin{enumerate}

    \item[(1)] $\alpha_1= 0, \beta_1= 0, \gamma_1 \neq 0, \alpha_2\neq \alpha_3$.
   It is easy to see that the orbit is
   $\langle \nabla_1 +\nabla_2, \alpha \nabla_2+ \nabla_3, \nabla_3+\nabla_4 \rangle$.

    \item[(2)] $\alpha_1= 0, \beta_1= 0, \gamma_1 \neq 0, \alpha_2=\alpha_3, \beta_2\neq \beta_3$.
   Then we have the case (a.1).

    \item[(3)] $\alpha_1= 0, \beta_1= 0, \gamma_1 \neq 0, \alpha_2=\alpha_3, \beta_2= \beta_3$.
   It is easy to see that the orbit is
   $\langle \nabla_1 +\nabla_2, \nabla_2+ \nabla_3, \nabla_4 \rangle$.

 \item[(4)] $\alpha_1= 0, \beta_1\neq 0, \gamma_1 = 0, \alpha_2 = \alpha_3$.
   It is easy to see that the orbit is
   $\langle \nabla_1 +\nabla_3, \nabla_2, \nabla_4 \rangle$.

    \item[(5)] $\alpha_1= 0, \beta_1\neq 0, \gamma_1 = 0, \alpha_2\neq \alpha_3$.
   It is easy to see that we have the case (a4).

    \item[(6)] $\alpha_1\neq 0, \beta_1= 0, \gamma_1 = 0$.
   It is easy to see that the orbit is
   $\langle \nabla_1 +\nabla_4, \nabla_2, \nabla_3 \rangle$.

    \end{enumerate}

    \;  \;

(b) $\alpha_4 \neq 0, \beta_3\neq 0, \gamma_2 = 0$.
\begin{enumerate}

    \item[(1)]    $\beta_2 \neq \beta_3$.
   It is easy to see that we can choose a representative with $\alpha_4 \neq0, \beta_3 \neq 0, \gamma_2\neq 0$
   and so we have the case (a).

    \item[(2)]
    $\alpha_1= 0, \beta_1= 0, \gamma_1 \neq 0, \alpha_2\neq \alpha_3, \beta_2=\beta_3$.
   It is easy to see that the orbit is
   $\langle \nabla_1,  \nabla_2+ \nabla_3, \nabla_3+\nabla_4 \rangle$.

    \item[(3)] $\alpha_1= 0, \beta_1= 0, \gamma_1 \neq 0, \alpha_2=\alpha_3, \beta_2 = \beta_3$.
   It is easy to see that the orbit is
   $\langle \nabla_1, \nabla_2+ \nabla_3, \nabla_4 \rangle$.

\end{enumerate}

    \;  \;

(c) $\alpha_4 \neq 0, \beta_3= 0, \gamma_2 = 0$.

\begin{enumerate}
    \item[(1)]   It is easy to see that we can choose a representative with $\alpha_4 \neq0, \beta_3 \neq 0$
   and so we have the cases (b) or (a).

    \end{enumerate}

 \;  \;

(d) $\alpha_4 =0, \beta_3= 0, \gamma_2 = 0$.

\quad   It is easy to see that the orbit is
   $\langle \nabla_1,  \nabla_2, \nabla_3 \rangle$.

It is easy to verify that all previous orbits  are different, and so we obtain
\begin{align*}
T_3(\mu_{1,1}^n) & =
\Big\langle \nabla_1,  \nabla_2 , \nabla_3 \Big\rangle \cup
\Big\langle \nabla_1, \nabla_2+ \nabla_3, \nabla_4 \Big\rangle \cup
\Big\langle \nabla_1, \nabla_2+ \nabla_3, \nabla_3+\nabla_4 \Big\rangle\\
&  \qquad  \cup \Big\langle \nabla_1 +\nabla_2, \nabla_2+ \nabla_3, \nabla_4 \Big\rangle \cup
\Big\langle \nabla_1 +\nabla_2, \alpha \nabla_2+ \nabla_3, \nabla_3+\nabla_4 \Big\rangle \\
 & \qquad  \cup
\Big\langle \nabla_1 +\nabla_3, \nabla_2, \nabla_4 \Big\rangle \cup
\Big\langle \nabla_1 +\nabla_4, \nabla_2, \nabla_3 \Big\rangle.
\end{align*}

\subsubsection{$4$-dimensional central extensions of $\mu_{1,1}^n$}
There is only one $4$-dimensional non-split central extension of the algebra $\mu_{1,1}^n$.
It is defined by $\langle \nabla_1, \nabla_2, \nabla_3, \nabla_4 \rangle$.

\subsubsection{Non-split central extensions of $\mu_{1,1}^n$}
So we have the next theorem

\begin{theorem}
An arbitrary non-split central extension of the algebra $\mu_{1,1}^n$ is isomorphic to one of the following pairwise non-isomorphic algebras

\begin{itemize}
\item one-dimensional central extensions:
\[\mu_{1,1}^{n+1}, \quad \mu_{1,2}^{n+1}, \quad \mu_{1,3}^{n+1}, \quad \mu_{1,4}^{n+1}, \quad \mu_{2,1}^{n+1}, \quad \mu_{2,2}^{n+1}(\alpha), \quad \mu_{2,3}^{n+1}, \quad \mu_{2,4}^{n+1}; \]

\item two-dimensional central extensions:
\[\mu_{2,1}^{n+2}, \! \quad \mu_{2,2}^{n+2}(\alpha),  \quad \mu_{2,3}^{n+2},   \quad \mu_{2,4}^{n+2}, \quad \mu_{2,5}^{n+2}, \quad \mu_{2,6}^{n+2}, \quad \mu_{2,7}^{n+2}, \quad \mu_{2,8}^{n+2}, \quad \mu_{2,9}^{n+2}(\alpha), \quad \mu_{2,10}^{n+2}, \]
\[ \mu_{3,1}^{n+2}, \quad \mu_{3,2}^{n+2}, \quad \mu_{3,3}^{n+2}(\alpha), \quad \mu_{3,4}^{n+2}; \]

\item three-dimensional central extensions:
\[\mu_{3,1}^{n+3}, \quad \mu_{3,2}^{n+3}, \quad \mu_{3,3}^{n+3}(\alpha), \quad \mu_{3,4}^{n+3}, \quad \mu_{3,5}^{n+3}, \quad \mu_{3,6}^{n+3}, \quad \mu_{3,7}^{n+3}; \]
\item four-dimensional central extensions:
\[\mu_{4,1}^{n+4}.\]
\end{itemize}
\end{theorem}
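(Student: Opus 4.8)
By the correspondence recorded above between $\Aut(\mu_{1,1}^n)$-orbits on $T_s(\mu_{1,1}^n)$ and isomorphism classes of non-split $s$-dimensional central extensions of $\mu_{1,1}^n$, the proof reduces entirely to the data already assembled: the orbit sets $T_1(\mu_{1,1}^n)$, $T_2(\mu_{1,1}^n)$ and $T_3(\mu_{1,1}^n)$ listed above, together with the single orbit $\langle\nabla_1,\nabla_2,\nabla_3,\nabla_4\rangle$ which exhausts the $4$-dimensional case (no non-split extension of dimension $\geq 5$ exists, since $\dim H^2(\mu_{1,1}^n,\mathbb{C})=4$). Thus it only remains, for each listed orbit representative, to write the corresponding central extension down explicitly and to recognise it as one of the named algebras.

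Concretely, for a representative $W=\langle[\theta_1],\dots,[\theta_s]\rangle$ I would take $\mathbb{V}=\langle e_{n+1},\dots,e_{n+s}\rangle$ and set $\theta=\sum_{i=1}^{s}\theta_i e_{n+i}$; the multiplication of $(\mu_{1,1}^n)_\theta$ is then $e_ie_j=e_{i+j}$ for $2\leq i+j\leq n-1$ together with the extra products dictated by the dictionary $\nabla_1\leftrightarrow\sum_{j=1}^{n-1}\Delta_{j,n-j}$, $\nabla_2\leftrightarrow\Delta_{1,n}$, $\nabla_3\leftrightarrow\Delta_{n,1}$, $\nabla_4\leftrightarrow\Delta_{n,n}$ (so, for instance, a $\nabla_1$-summand sitting in the $e_{n+i}$-slot forces $e_je_{n-j}=e_{n+i}$ for $1\leq j\leq n-1$). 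For each such algebra I would then exhibit a suitable permutation and rescaling of the basis — essentially, promoting the old top element(s) of the null-filiform part past the newly adjoined central generators — that puts the table into standard shape: e.g. $\langle\nabla_1\rangle$ becomes $\mu_{1,1}^{n+1}$, the orbits involving $\nabla_3$ or $\nabla_4$ give the remaining filiform algebras $\mu_{1,2}^{n+1},\mu_{1,3}^{n+1},\mu_{1,4}^{n+1}$ and the quasi-filiform ones $\mu_{2,1}^{n+1},\dots,\mu_{2,4}^{n+1}$, the $2$- and $3$-dimensional orbits give members of the quasi-filiform family $\mu_{2,\bullet}$ in the appropriate dimension together with the genuinely new algebras $\mu_{3,\bullet}$, and the maximal orbit $\langle\nabla_1,\nabla_2,\nabla_3,\nabla_4\rangle$ yields $\mu_{4,1}^{n+4}$. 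The orbits carrying the free parameter, namely $\langle\nabla_2+\alpha\nabla_3\rangle$ and its $2$- and $3$-dimensional analogues, produce the one-parameter families $\mu_{2,2}^{n+1}(\alpha)$, $\mu_{2,9}^{n+2}(\alpha)$, $\mu_{3,3}^{n+2}(\alpha)$ and $\mu_{3,3}^{n+3}(\alpha)$ appearing in the statement.

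Finally, pairwise non-isomorphism is almost automatic: extensions of different dimensions $n+1,n+2,n+3,n+4$ cannot be isomorphic, and within a fixed dimension two extensions arising from different orbits are non-isomorphic by the lemma above — in particular distinct values of $\alpha$ label distinct orbits, hence distinct algebras, and the new algebras $\mu_{3,\bullet}$, $\mu_{4,1}$ cannot fall back onto the filiform or quasi-filiform lists because those lists are themselves complete and irredundant. The only real obstacle is bookkeeping: one must, case by case, carry out the correct relabeling of basis vectors so that every constructed multiplication table is identified with a named algebra, and in the few borderline cases check that the nilpotency type (filiform, quasi-filiform, or neither) comes out as claimed; this is routine but long, and is precisely what the appendix tabulates.
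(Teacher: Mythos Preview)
Your proposal is correct and matches the paper's approach exactly: the paper offers no separate proof of this theorem, simply stating it as the outcome of the preceding orbit computations $T_1(\mu_{1,1}^n),\dots,T_4(\mu_{1,1}^n)$ together with the general orbit--isomorphism correspondence, leaving the identification of each extension with a named algebra to the appendix. Your sketch makes the identification step (basis relabeling and rescaling) slightly more explicit than the paper does, but the argument is the same.
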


\subsection{Central extensions of $\mu_{1,2}^n$}

Let us denote
\[ \nabla_1= [\Delta_{1,n}], \ \ \nabla_2= [\Delta_{n,1}], \ \ \nabla_3= [\Delta_{n,n}]\in H^2(\mu_{1,2}^n,\mathbb{C}) \]
and $x=a_{1,1},y=a_{n-1,n},z=a_{n,1}$.  Let $\theta=\alpha_1 \nabla_1 +\alpha_2 \nabla_2+\alpha_3\nabla_3$. Then by
\[\left(
  \begin{array}{ccccc}
    \ast & \dots & 0 & 0 & \alpha_1^\prime \\
    0 & \dots & 0 & 0 & 0 \\
    \vdots & \ldots & \vdots & \vdots & \vdots \\
    0 & \dots & 0 & 0 & 0 \\
    \alpha_2^\prime & \dots & 0 & 0 & \alpha_3^\prime \\
  \end{array}
\right)=(\phi_{1,2}^n)^T\left(
  \begin{array}{ccccc}
    0 & \dots & 0 & 0 & \alpha_1 \\
    0 & \dots & 0 & 0 & 0 \\
    \vdots & \ldots & \vdots & \vdots & \vdots \\
    0 & \dots & 0 & 0 & 0 \\
    \alpha_2 & \dots & 0 & 0 & \alpha_3 \\
  \end{array}
\right)\phi_{1,2}^n,\]
we have the action of the automorphism group on the subspace $\langle \theta \rangle$ as
\[\langle
x^{(n-1)/2}(\alpha_1x+\alpha_3z)\nabla_1 + x^{(n-1)/2}(\alpha_2x+\alpha_3z)\nabla_2 + \alpha_3 x^{n-1} \nabla_3\rangle.\]

\subsubsection{$1$-dimensional central extensions of $\mu_{1,2}^n$}
Let us consider the following cases:

\begin{enumerate}

\item[(a)] $\alpha_3 =0$.

        \begin{enumerate}
        \item[(1)] $\alpha_1=0, \alpha_2 \neq 0$.
        Choosing $x=\frac{1}{\sqrt[\leftroot{-6}\uproot{3}n+1]{\alpha_2^2}}$, we have a representative $\nabla_2$ and the orbit is $\langle \nabla_2 \rangle$.

        \item[(2)] $\alpha_1\neq0$.
        Choosing $x= \frac{1}{\sqrt[\leftroot{-6}\uproot{3}n+1]{\alpha_1^2}}$ and $\alpha=\frac{\alpha_2}{\alpha_1}$, we have a representative $\nabla_1+\alpha \nabla_2$ and the orbit is $\langle \nabla_1+\alpha \nabla_2 \rangle$.

    \end{enumerate}

\item[(b)] $\alpha_3 \neq 0$.

    \begin{enumerate}
        \item[(1)] $\alpha_1=\alpha_2$.
        Choosing $x = \frac{1}{\sqrt[\leftroot{-3}\uproot{3}n-1]{\alpha_3}}, z= -\frac{\alpha_1}{\sqrt[\leftroot{-3}\uproot{3}n-1]{\alpha_3^n}}$, we have a representative $\nabla_3$ and the orbit is
        $\langle \nabla_3 \rangle$.

        \item[(2)] $\alpha_1\neq\alpha_2$.
        Choosing $x = \sqrt[\leftroot{-3}\uproot{3}n-3]{\frac{(\alpha_1-\alpha_2)^2}{\alpha_3^2}}, z= - \alpha_2\sqrt[\leftroot{-3}\uproot{3}n-3]{\frac{(\alpha_1-\alpha_2)^2}{\alpha_3^{n-1}}}$,
        we have a representative $\sqrt[\leftroot{-3}\uproot{3}n-3]{\frac{(\alpha_1-\alpha_2)^{2(n-1)}}{\alpha_3^{n+1}}}(\nabla_1+ \nabla_3)$ and the orbit is $\langle \nabla_1+ \nabla_3 \rangle$.
    \end{enumerate}
\end{enumerate}

It is easy to verify that all previous  orbits   are different, and so we obtain
\[
T_1(\mu_{1,2}^n)=
\Big \langle \nabla_1 +\alpha\nabla_2 \Big \rangle \cup
\Big \langle \nabla_1+ \nabla_3 \Big \rangle \cup
\Big \langle \nabla_2 \Big \rangle \cup
\Big \langle \nabla_3  \Big \rangle.
\]

\subsubsection{$2$-dimensional central extensions of $\mu_{1,2}^n$}

We may assume that a $2$-dimensional subspace is generated by
\begin{align*}
\theta_1 & = \alpha_1 \nabla_1+ \alpha_2 \nabla_2+\alpha_3 \nabla_3, \\
\theta_2 & = \beta_1 \nabla_1+ \beta_2 \nabla_2.
\end{align*}

Then we have the two following cases:

\

(a) $\alpha_3\neq0$.
\begin{enumerate}
    \item[(1)] $\alpha_1=\alpha_2, \beta_1\neq0$.
   Choosing $x = \frac{1}{\sqrt[\leftroot{-6}\uproot{3}n+1]{\beta_1^2}},
   z= - \frac{\alpha_1}{\alpha_3\sqrt[\leftroot{-6}\uproot{3}n+1]{\beta_1^2}}, \alpha=\frac{\beta_2}{\beta_1}$, we have a representative
   $\{\frac{\alpha_3}{\sqrt[\leftroot{-6}\uproot{3}n+1]{\beta_1^{2(n-1)}}} \nabla_3, \nabla_1+\alpha\nabla_2 \}$ and the orbit is $\langle \nabla_1+\alpha\nabla_2, \nabla_3  \rangle$.

    \item[(2)] $\alpha_1=\alpha_2, \beta_1=0, \beta_2\neq0$.
   Choosing $x = \frac{1}{\sqrt[\leftroot{-6}\uproot{3}n+1]{\beta_2^2}},
   z= - \frac{\alpha_1}{\alpha_3\sqrt[\leftroot{-6}\uproot{3}n+1]{\beta_2^2}}$, we have a representative
   $\{ \frac{\alpha_3}{\sqrt[\leftroot{-6}\uproot{3}n+1]{\beta_2^{2(n-1)}}}\nabla_3, \nabla_2 \}$ and the orbit is $\langle \nabla_2, \nabla_3 \rangle$.

   \item[(3)] $\alpha_1\neq\alpha_2, \beta_1\neq0, \beta_1\neq\beta_2$.
We can choose a representative with $\alpha_1=\alpha_2$, and so we have the case (a.1).

   \item[(4)] $\alpha_1\neq\alpha_2, \beta_1\neq0, \beta_1=\beta_2$.
   Choosing $x = \sqrt[\leftroot{-3}\uproot{3}n-3]{\frac{(\alpha_2-\alpha_1)^2}{\alpha_3^2}}, z= - \alpha_1\sqrt[\leftroot{-3}\uproot{3}n-3]{\frac{(\alpha_2-\alpha_1)^2}{\alpha_3^{n-1}}} $, we have a representative
   \[\bigg\{ \sqrt[\leftroot{-3}\uproot{3}n-3]{\frac{(\alpha_2-\alpha_1)^{2(n-1)}}{\alpha_3^{n+1}}}(\nabla_2+\nabla_3), \beta_1\sqrt[\leftroot{-3}\uproot{3}n-3]{\frac{(\alpha_2-\alpha_1)^{n+1}}{\alpha_3^{n+1}}}(\nabla_1+\nabla_2) \bigg\}\] and the orbit is $\langle  \nabla_1+\nabla_2,\nabla_2+\nabla_3\rangle$.

    \item[(5)] $\alpha_1\neq\alpha_2, \beta_1=0, \beta_2\neq0$.
We can choose a representative with $\alpha_1=\alpha_2$, and so we have the case (a.2).
\end{enumerate}

 \;  \;

(b) $\alpha_3=0$.

\quad  Then it is easy to see that the orbit is $\langle \nabla_1,\nabla_2\rangle$.

It is easy to verify that all previous orbits  are different, and so we obtain
\[
T_2(\mu_{1,2}^n)=
\Big \langle \nabla_1, \nabla_2 \Big \rangle \cup
\Big \langle \nabla_1+\nabla_2, \nabla_2+\nabla_3 \Big \rangle \cup
\Big \langle \nabla_1+\alpha\nabla_2, \nabla_3 \Big \rangle \cup
\Big \langle \nabla_2, \nabla_3 \Big \rangle.
\]

\subsubsection{$3$-dimensional central extensions of $\mu_{1,2}^n$}
There is only one $3$-dimensional non-split central extension of the algebra $\mu_{1,2}^n$.
It is defined by $\langle \nabla_1, \nabla_2, \nabla_3 \rangle$.

\subsubsection{Non-split central extensions of $\mu_{1,2}^n$}
So we have the next result.

\begin{theorem}
An arbitrary non-split central extension of the algebra $\mu_{1,2}^n$ is isomorphic to one of the following pairwise non-isomorphic algebras

\begin{itemize}
\item one-dimensional central extensions:
\[\mu_{2,3}^{n+1}, \quad \mu_{2,6}^{n+1}, \quad \mu_{2,9}^{n+1}(\alpha), \quad \mu_{2,10}^{n+1};\]

\item two-dimensional central extensions:
\[\mu_{3,3}^{n+2}(\alpha), \quad \mu_{3,4}^{n+2}, \quad \mu_{3,6}^{n+2}, \quad \mu_{3,7}^{n+2};\]
\item three-dimensional central extensions:
\[\mu_{4,2}^{n+3}.\]
\end{itemize}
\end{theorem}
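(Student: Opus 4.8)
The plan is to run the Skjelbred--Sund \emph{Procedure} for the algebra $\mathbf{A}' = \mu_{1,2}^n$, whose inputs $H^2(\mu_{1,2}^n,\mathbb{C})$, $\Ann(\mu_{1,2}^n)$ and $\Aut(\mu_{1,2}^n)$ have already been computed above. Since $\dim H^2(\mu_{1,2}^n,\mathbb{C})=3$, the nontrivial cases are $s=1,2,3$, and the $\Aut$-orbit decompositions $T_1(\mu_{1,2}^n)$, $T_2(\mu_{1,2}^n)$ and $T_3(\mu_{1,2}^n)$ have already been established in the preceding subsubsections. So the theorem is a bookkeeping statement: for each orbit representative $W = \langle [\theta_1],\dots,[\theta_s]\rangle$ in $T_s(\mu_{1,2}^n)$, one must (i) write down the central extension $(\mu_{1,2}^n)_\theta$ with $\theta = \sum_i \theta_i e_i$ explicitly on the basis $\{e_1,\dots,e_n,e_{n+1},\dots,e_{n+s}\}$, (ii) recognize it (after a possible basis change) as one of the named algebras from the tables in Section~\ref{S:fil} (or in the appendix), and (iii) confirm that distinct orbits yield non-isomorphic algebras.

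First I would handle $s=1$: for each of the four representatives $\nabla_1+\alpha\nabla_2$, $\nabla_1+\nabla_3$, $\nabla_2$, $\nabla_3$ (recall $\nabla_1=[\Delta_{1,n}]$, $\nabla_2=[\Delta_{n,1}]$, $\nabla_3=[\Delta_{n,n}]$), form the $(n+1)$-dimensional algebra by adjoining a central basis vector $e_{n+1}$ with $e_1e_n = e_{n+1}$ and/or $e_ne_1=\alpha e_{n+1}$ and/or $e_ne_n=e_{n+1}$ on top of the relations $e_ie_j=e_{i+j}$ ($2\le i+j\le n-1$), $e_ne_n=e_{n-1}$ of $\mu_{1,2}^n$. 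Comparing with the quasi-filiform lists, $\nabla_2\mapsto \mu_{2,6}^{n+1}$ (the $e_{n-1}e_{n-1}=e_n$-type where the extra generator sits as $e_{n-1}\cdot e_1$), $\nabla_3\mapsto \mu_{2,3}^{n+1}$, $\nabla_1+\alpha\nabla_2\mapsto \mu_{2,9}^{n+1}(\alpha)$, and $\nabla_1+\nabla_3\mapsto\mu_{2,10}^{n+1}$, matching the stated list; the index shift to $n+1$ is because $\mu_{1,2}^n$ has $\dim\Ann=1$ so its extensions have dimension $n+1$. For $s=2$ I would do the same with the four representatives in $T_2(\mu_{1,2}^n)$, adjoining two central vectors $e_{n+1},e_{n+2}$, and identify them with $\mu_{3,3}^{n+2}(\alpha),\mu_{3,4}^{n+2},\mu_{3,6}^{n+2},\mu_{3,7}^{n+2}$ respectively (here the orbit $\langle\nabla_1,\nabla_2\rangle$ and $\langle\nabla_2,\nabla_3\rangle$ need a little care in reindexing the two new generators). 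For $s=3$ there is the single orbit $\langle\nabla_1,\nabla_2,\nabla_3\rangle$, which gives the unique algebra labelled $\mu_{4,2}^{n+3}$.

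The non-isomorphism claim splits in two parts. Within each fixed $s$, Lemma (the analogue of \cite[Lemma 17]{hac16} stated above) says $(\mu_{1,2}^n)_\theta\cong(\mu_{1,2}^n)_\vartheta$ iff the orbits of the generating tuples coincide, and these orbits were already shown to be pairwise distinct when computing $T_s$; so no two algebras in the same block of the list coincide. Across different $s$ (and across the different starting algebras $\mu_{1,1}^n,\mu_{1,2}^n,\dots$), the extensions have different total dimensions or different dimensions of the annihilator / of $\mathbf{A}^i$, and by the uniqueness Lemma each such algebra determines $\mathbf{A}/\Ann(\mathbf{A})$ up to isomorphism; invoking that the filiform and quasi-filiform classifications of \cite{KL} are already known to be complete and pairwise non-isomorphic, no collisions occur.

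The main obstacle, as usual in this method, is step (ii): correctly normalizing each extension to the canonical form of \cite{KL}. After adjoining the central elements one typically has an algebra whose multiplication table is \emph{not} literally in the listed normal form — the new generator may need to be renamed $e_n$ (or $e_{n+1}$, etc.) and the old $e_{n-1},e_n$ reshuffled, and one must check that the nilpotency filtration $\mathbf{A}^i$ behaves correctly so that the result is genuinely (quasi-)filiform of the claimed type rather than split or of a different type. This is where one must be careful that, e.g., $\langle\nabla_2\rangle$ really produces $\mu_{2,6}^{n+1}$ and not a split algebra with an annihilator component --- but that is guaranteed because each representative was chosen from $T_s(\mu_{1,2}^n)$, i.e.\ with $\bigcap_i\Ann(\theta_i)\cap\Ann(\mu_{1,2}^n)=0$, so by the remark after \cite[Lemma 13]{hac16} non-splitness is automatic once the $[\theta_i]$ are linearly independent. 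With that in hand the proof reduces to presenting the explicit isomorphisms, which I would relegate to the appendix's table of ``main associative algebras''.
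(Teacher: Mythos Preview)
Your proposal is correct and follows exactly the paper's approach: the paper offers no separate proof of this theorem --- the argument \emph{is} the orbit computation of $T_1$, $T_2$, $T_3$ carried out in the preceding subsubsections, after which each representative is matched against the tables in the appendix, and non-isomorphism follows from Lemma~1.11 (distinct orbits give non-isomorphic extensions). One small slip in your bookkeeping: the identifications of $\langle\nabla_2\rangle$ and $\langle\nabla_1+\nabla_3\rangle$ are interchanged --- adjoining $e_{n+1}$ with $e_ne_1=e_{n+1}$ to $\mu_{1,2}^n$ is literally the table for $\mu_{2,10}^{n+1}$, so $\langle\nabla_2\rangle\mapsto\mu_{2,10}^{n+1}$ and hence $\langle\nabla_1+\nabla_3\rangle\mapsto\mu_{2,6}^{n+1}$.
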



\subsection{Central extensions of $\mu_{1,3}^n$}

We are using notations as in previous subsection.
Then by
\[\left(
  \begin{array}{ccccc}
    \ast & \dots & 0 & 0 & \alpha_1^\prime \\
    0 & \dots & 0 & 0 & 0 \\
    \vdots & \ldots & \vdots & \vdots & \vdots \\
    0 & \dots & 0 & 0 & 0 \\
    \alpha_2^\prime & \dots & 0 & 0 & \alpha_3^\prime \\
  \end{array}
\right)=(\phi_{1,3}^n)^T\left(
  \begin{array}{ccccc}
    0 & \dots & 0 & 0 & \alpha_1 \\
    0 & \dots & 0 & 0 & 0 \\
    \vdots & \ldots & \vdots & \vdots & \vdots \\
    0 & \dots & 0 & 0 & 0 \\
    \alpha_2 & \dots & 0 & 0 & \alpha_3 \\
  \end{array}
\right)\phi_{1,3}^n,\]
we have the action of the automorphism group on the subspace $\langle \theta \rangle$ as
\[\langle
(\alpha_1x^{n-1}+\alpha_3x^{n-2}z)\nabla_1 + (\alpha_2x^{n-1}+\alpha_3x^{n-2}z)\nabla_2 + \alpha_3 x^{2n-4} \nabla_3\rangle.\]

\subsubsection{$1$-dimensional central extensions of $\mu_{1,3}^n$}

We consider the following cases:

\begin{enumerate}
\item[(a)] $\alpha_3 =0$.
    \begin{enumerate}
        \item[(1)] $\alpha_1=0, \alpha_2 \neq 0$.
        Choosing $x=\frac{1}{\sqrt[\leftroot{-3}\uproot{3}n-1]{\alpha_2}}$, we have a representative $\nabla_2$ and the orbit is $\langle \nabla_2 \rangle$.

        \item[(2)] $\alpha_1\neq0$.
        Choosing $x= \frac{1}{\sqrt[\leftroot{-3}\uproot{3}n-1]{\alpha_1}}$ and $\alpha= \frac{\alpha_2}{\alpha_1}$, we have a representative $\nabla_1+\alpha \nabla_2$ and the orbit is $\langle \nabla_1+\alpha \nabla_2 \rangle$.

    \end{enumerate}

\item[(b)] $\alpha_3 \neq 0$.

\begin{enumerate}
    \item[(1)] $\alpha_1=\alpha_2$.
        Choosing $x = \frac{1}{\sqrt[\leftroot{-2}\uproot{2}2n-4]{\alpha_3}}, z= - \frac{\alpha_1}{\sqrt[\leftroot{-5}\uproot{3}n-2]{\alpha_3^{n-1}}}$, we have a representative $\nabla_3$ and the orbit is
        $\langle \nabla_3 \rangle$.

        \item[(2)] $\alpha_1\neq\alpha_2$.
        Choosing $x = \sqrt[\leftroot{-3}\uproot{3}n-3]{\frac{\alpha_1-\alpha_2}{\alpha_3}}, z= - \alpha_2\sqrt[\leftroot{-3}\uproot{3}n-3]{\frac{\alpha_1-\alpha_2}{\alpha_3^{n-2}}}$,
        we have a representative $\sqrt[\leftroot{-3}\uproot{3}n-3]{\frac{(\alpha_1-\alpha_2)^{2(n-2)}}{\alpha_3^{n-1}}}(\nabla_1+ \nabla_3)$ and the orbit is $\langle \nabla_1+ \nabla_3 \rangle$.
    \end{enumerate}
\end{enumerate}

It is easy to verify that all  previous orbits are different, and so we obtain
\[
T_1(\mu_{1,3}^n)=
\Big \langle \nabla_1 +\alpha\nabla_2 \Big \rangle \cup
\Big \langle \nabla_1+ \nabla_3 \Big \rangle \cup
\Big \langle \nabla_2 \Big \rangle \cup
\Big \langle \nabla_3  \Big \rangle .
\]

\subsubsection{$2$-dimensional central extensions of $\mu_{1,3}^n$}

We may assume that a $2$-dimensional subspace is generated by
\begin{align*}
\theta_1 & =\alpha_1 \nabla_1+ \alpha_2 \nabla_2+\alpha_3 \nabla_3, \\
\theta_2 & = \beta_1 \nabla_1+ \beta_2 \nabla_2.
\end{align*}

Then we have the two following cases:

\begin{enumerate}

\item[(a)] $\alpha_3\neq0$.
\begin{enumerate}
    \item[(1)] $\alpha_1=\alpha_2, \beta_1\neq0$.
   It is easy to see that the orbit is $\langle \nabla_1+\alpha\nabla_2, \nabla_3  \rangle$.

    \item[(2)] $\alpha_1=\alpha_2, \beta_1=0, \beta_2\neq0$.
   It is easy to see that the orbit is  $\langle \nabla_2, \nabla_3 \rangle$.

   \item[(3)] $\alpha_1\neq\alpha_2, \beta_1\neq0, \beta_1\neq\beta_2$.
It is easy to see that we have the case (a.1).

   \item[(4)] $\alpha_1\neq\alpha_2, \beta_1\neq0, \beta_1=\beta_2$.
  It is easy to see that the orbit is  $\langle  \nabla_1+\nabla_2,\nabla_2+\nabla_3\rangle$.

    \item[(5)] $\alpha_1\neq\alpha_2, \beta_1=0, \beta_2\neq0$.
   It is easy to see that we have the case (a.2).
\end{enumerate}

\item[(b)] $\alpha_3=0$.

Then it is easy to see that the orbit is $\langle \nabla_1,\nabla_2\rangle$.
\end{enumerate}

It is easy to verify that all previous orbits are different, and so we obtain
\[
T_2(\mu_{1,3}^n)=
\Big \langle \nabla_1, \nabla_2 \Big \rangle \cup
\Big \langle \nabla_1+\nabla_2, \nabla_1+\nabla_3 \Big \rangle \cup
\Big \langle \nabla_1+\alpha\nabla_2, \nabla_3 \Big \rangle \cup
\Big \langle \nabla_2, \nabla_3 \Big \rangle.
\]

\subsubsection{$3$-dimensional central extensions of $\mu_{1,3}^n$}
There is only one $3$-dimensional non-split central extension of the algebra $\mu_{1,3}^n$.
It is defined by $\langle \nabla_1, \nabla_2, \nabla_3 \rangle$.

\subsubsection{Non-split central extensions of $\mu_{1,3}^n$}
So we have the next theorem.

\begin{theorem}
An arbitrary non-split central extension of the algebra $\mu_{1,3}^n$ is isomorphic to one of the following pairwise non-isomorphic algebras

\begin{itemize}
\item one-dimensional central extensions:
\[\mu_{2,1}^{n+1}, \quad \mu_{2,2}^{n+1}(\alpha)_{\alpha\neq1}, \quad \mu_{2,5}^{n+1}, \quad \mu_{2,6}^{n+1}, \quad \mu_{2,7}^{n+1};\]
\item two-dimensional central extensions:
\[\mu_{3,1}^{n+2}, \quad \mu_{3,3}^{n+2}(\alpha)_{\alpha\neq1}, \quad \mu_{3,4}^{n+2}, \quad \mu_{3,5}^{n+2}, \quad \mu_{3,6}^{n+2};\]
\item three-dimensional central extensions:
\[\mu_{4,3}^{n+3}.\]
\end{itemize}
\end{theorem}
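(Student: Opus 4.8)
The plan is to apply the Procedure described in Section~\ref{S:prel} to $\mathbf{A}=\mu_{1,3}^n$, whose second cohomology and automorphism group are recorded in the Propositions above. Since $\dim H^2(\mu_{1,3}^n,\mathbb{C})=3$, a non-split central extension of $\mu_{1,3}^n$ by a vector space $\mathbb{V}$ of dimension $s$ can exist only for $s\in\{1,2,3\}$, and, by the one-to-one correspondence between the $\Aut(\mu_{1,3}^n)$-orbits on $T_s(\mu_{1,3}^n)$ and the isomorphism classes in $E(\mu_{1,3}^n,\mathbb{V})$, every such extension is isomorphic to $(\mu_{1,3}^n)_\theta$ for a representative $\theta$ of one of the orbits of $T_1(\mu_{1,3}^n)$, $T_2(\mu_{1,3}^n)$ or $T_3(\mu_{1,3}^n)$ computed above. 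As this correspondence is a bijection, distinct orbits give non-isomorphic algebras, so the pairwise non-isomorphism asserted in the statement is automatic once each representative has been identified with an algebra from the appendix.

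I would dispose of $s=3$ first, where the choice is forced: $T_3(\mu_{1,3}^n)=\langle\nabla_1,\nabla_2,\nabla_3\rangle$ is all of $H^2$, so, with $\mathbb{V}=\langle e_{n+1},e_{n+2},e_{n+3}\rangle$ and $\theta=\Delta_{1,n}e_{n+1}+\Delta_{n,1}e_{n+2}+\Delta_{n,n}e_{n+3}$, the extension $(\mu_{1,3}^n)_\theta$ has products $e_ie_j=e_{i+j}$ for $2\le i+j\le n-1$ together with $e_1e_n=e_{n-1}+e_{n+1}$, $e_ne_1=e_{n+2}$ and $e_ne_n=e_{n+3}$; after, if convenient, replacing $e_{n-1}$ by $e_{n-1}+e_{n+1}$, one recognizes this as $\mu_{4,3}^{n+3}$.

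For $s=1$ and $s=2$ I would go through the orbit representatives of $T_1(\mu_{1,3}^n)$ and $T_2(\mu_{1,3}^n)$ one at a time. Given a representative $\theta=\sum_i\theta_ie_i$ with each $\theta_i$ a linear combination of $\Delta_{1,n}$, $\Delta_{n,1}$ and $\Delta_{n,n}$, the extension $(\mu_{1,3}^n)_\theta$ keeps $e_ie_j=e_{i+j}$ for $2\le i+j\le n-1$ and $e_1e_n=e_{n-1}$, and acquires the cocycle contributions — a $\nabla_1$-summand adds $e_{n+1}$ to $e_1e_n$, a $\nabla_2$-summand gives $e_ne_1=e_{n+1}$, a $\nabla_3$-summand gives $e_ne_n=e_{n+1}$, and likewise into $e_{n+2}$ for the second cocycle component when $s=2$ — each of these products being associative since $\theta\in Z^2(\mu_{1,3}^n,\mathbb{C})$. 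One then exhibits, for each of the resulting multiplication tables, an explicit invertible linear change of basis putting it into one of the canonical forms of the appendix; carrying this out over all orbits produces the algebras $\mu_{2,1}^{n+1},\mu_{2,2}^{n+1}(\alpha)_{\alpha\neq1},\mu_{2,5}^{n+1},\mu_{2,6}^{n+1},\mu_{2,7}^{n+1}$ in the one-dimensional case and $\mu_{3,1}^{n+2},\mu_{3,3}^{n+2}(\alpha)_{\alpha\neq1},\mu_{3,4}^{n+2},\mu_{3,5}^{n+2},\mu_{3,6}^{n+2}$ in the two-dimensional case.

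The bulk of the work, and the only genuinely delicate point, is this last identification. One must keep careful track, in each change of basis, of the way the relation $e_1e_n=e_{n-1}$ inherited from $\mu_{1,3}^n$ interacts with the adjoined cocycle terms, and one discovers that the parametrization of the orbit $\langle\nabla_1+\alpha\nabla_2\rangle$ — and of its two-dimensional analogue $\langle\nabla_1+\alpha\nabla_2,\nabla_3\rangle$ — does not line up with the parametrization of $\mu_{2,2}^{n+1}(\alpha)$ (respectively $\mu_{3,3}^{n+2}(\alpha)$) over the whole parameter line: the distinguished value $\alpha=1$ falls outside that family and is absorbed, together with the three remaining orbits, into the non-parametrized algebras of the list, which is exactly why those families appear with the restriction $\alpha\neq1$. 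Once every orbit has been matched in this way, the pairwise non-isomorphism follows at once from the bijection, completing the proof.
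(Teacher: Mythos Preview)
Your proposal is correct and follows the same route as the paper: you invoke the orbit computations for $T_1$, $T_2$, $T_3$ carried out in the preceding subsections, and then reduce the theorem to matching each orbit representative with an algebra from the appendix (the paper itself states the theorem immediately after those computations without spelling out the matching). Your observation that the orbit $\langle\nabla_1+\alpha\nabla_2\rangle$ at $\alpha=1$ lands on $\mu_{2,5}^{n+1}$ rather than on $\mu_{2,2}^{n+1}(1)$ is exactly the reason for the exclusion, and your $s=3$ identification with $\mu_{4,3}^{n+3}$ is in fact literal---no change of basis is needed there.
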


\subsection{Central extensions of $\mu_{1,4}^n$}

We are using notations as in previous subsection. Analogously to the previous cases by
\[\left(
  \begin{array}{ccccc}
    \ast & \dots & 0 & 0 & \alpha_1^\prime \\
    0 & \dots & 0 & 0 & 0 \\
    \vdots & \ldots & \vdots & \vdots & \vdots \\
    0 & \dots & 0 & 0 & 0 \\
    \alpha_2^\prime & \dots & 0 & 0 & \alpha_3^\prime \\
  \end{array}
\right)=(\phi_{1,4}^n)^T\left(
  \begin{array}{ccccc}
    0 & \dots & 0 & 0 & \alpha_1 \\
    0 & \dots & 0 & 0 & 0 \\
    \vdots & \ldots & \vdots & \vdots & \vdots \\
    0 & \dots & 0 & 0 & 0 \\
    \alpha_2 & \dots & 0 & 0 & \alpha_3 \\
  \end{array}
\right)\phi_{1,4}^n,\]
for any
$\theta=\alpha_1 \nabla_1 +\alpha_2 \nabla_2 +\alpha_3 \nabla_3$, we have
the action of the automorphism group on the subspace $\langle \theta \rangle$ as
\[\langle
(\alpha_1+\alpha_3z)\nabla_1 + (\alpha_2+\alpha_3z)\nabla_2 + \alpha_3 \nabla_3\rangle.\]

\subsubsection{$1$-dimensional central extensions of $\mu_{1,4}^n$}
Let us consider the following cases:

\begin{enumerate}
\item[(a)] $\alpha_3 =0$.
    \begin{enumerate}
        \item[(1)] $\alpha_1=0, \alpha_2 \neq 0$.
        Then it is easy to see that the orbit is $\langle \nabla_2 \rangle$.

        \item[(2)] $\alpha_1\neq0$.
        Then it is easy to see that the orbit is
$\langle \nabla_1+\alpha \nabla_2 \rangle$.

    \end{enumerate}

\item[(b)] $\alpha_3 \neq 0$.

Then it is easy to see that the orbit is $\langle \alpha\nabla_1+\nabla_3 \rangle$.

\end{enumerate}

It is easy to verify that all previous orbits are different, and so we obtain
\[
T_1(\mu_{1,4}^n)=
\Big \langle \nabla_1 +\alpha\nabla_2 \Big \rangle \cup
\Big \langle \alpha\nabla_1+ \nabla_3 \Big \rangle \cup
\Big \langle \nabla_2 \Big \rangle.
\]

\subsubsection{$2$-dimensional central extensions of $\mu_{1,4}^n$}
We may assume that a $2$-dimensional subspace is generated by
\begin{align*}
\theta_1 & = \alpha_1 \nabla_1+ \alpha_2 \nabla_2+\alpha_3 \nabla_3,  \\
\theta_2 & = \beta_1 \nabla_1+ \beta_2 \nabla_2.
\end{align*}







Then we have the three following cases:

\begin{enumerate}

   \item[(a)]   $\alpha_3 \neq 0,  \beta_1\neq 0, \alpha_1\neq\alpha_2, \beta_1=\beta_2$.
  Choosing $z= - \frac{\alpha_2}{\alpha_3}, \alpha=\frac{\alpha_1-\alpha_2}{\alpha_3}\neq0$,
    we have a representative $\{ \alpha_3(\alpha\nabla_1+ \nabla_3), \beta_1(\nabla_1+\nabla_2)\}$ and the orbit is $\langle \nabla_1+\nabla_2, \alpha\nabla_1+ \nabla_3\rangle_{\alpha\neq0}$.

    \item[(b)]  $\alpha_3 \neq 0,  \beta_1\neq 0,  \beta_1\neq\beta_2$.
  Choosing $z=\frac{\alpha_1\beta_2-\alpha_2\beta_1}{(\beta_1-\beta_2)\alpha_3}, \alpha=\frac{\beta_2}{\beta_1}$,
    we have a representative   \[\Big\{ \frac{\alpha_1-\alpha_2}{\beta_1-\beta_2}\beta_1(\nabla_1+ \alpha\nabla_2)+\alpha_3\nabla_3, \beta_1(\nabla_1+\alpha\nabla_2)\Big\}\] and the orbit is $\langle \nabla_1+\alpha\nabla_2, \nabla_3\rangle$.


    \item[(d)]   $\alpha_3 \neq 0,  \beta_1= 0, \beta_2 \neq0$.
    It is easy to see that the orbit is $\langle \nabla_2, \nabla_3 \rangle$.

    \item[(e)]   $\alpha_3 = 0$.
   It is easy to see that the orbit is $\langle \nabla_1,  \nabla_2 \rangle$.

\end{enumerate}

It is easy to verify that all previous orbits  are different, and so we obtain
\[
T_2(\mu_{1,4}^n)=
\Big \langle  \nabla_1,  \nabla_2 \Big \rangle \cup
\Big \langle \nabla_1+\alpha\nabla_2, \nabla_3 \Big \rangle \cup
\Big \langle \nabla_1+\nabla_2, \alpha\nabla_1+ \nabla_3 \Big\rangle_{\alpha\neq0} \cup
\Big \langle  \nabla_2, \nabla_3 \Big \rangle.
\]

\subsubsection{$3$-dimensional central extensions of $\mu_{1,4}^n$}
There is only one non-split $3$-dimensional central extension of $\mu_{1,4}^n$.
It is defined by $\langle \nabla_1, \nabla_2, \nabla_3 \rangle$.

So we have the following result.

\begin{theorem}
An arbitrary non-split central extension of the algebra $\mu_{1,4}^n$ is isomorphic to one of the following pairwise non-isomorphic algebras

\begin{itemize}
\item one-dimensional central extensions:
\[\mu_{2,2}^{n+1}(1), \quad \mu_{2,7}^{n+1}, \quad \mu_{2,8}^{n+1}, \quad \mu_{2,9}^{n+1}(\alpha)_{\alpha\neq1}, \quad \mu_{2,10}^{n+1};\]
\item two-dimensional central extensions:
\[\mu_{3,2}^{n+2}, \quad \mu_{3,3}^{n+2}(\alpha)_{\alpha\neq1}, \quad \mu_{3,4}^{n+2}, \quad \mu_{3,5}^{n+2}, \quad \mu_{3,6}^{n+2}, \quad \mu_{3,7}^{n+2};\]
\item three-dimensional central extensions:
\[\mu_{4,4}^{n+3}.\]

\end{itemize}
\end{theorem}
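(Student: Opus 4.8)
The plan is to apply the Procedure of Section~\ref{S:prel} to $\mathbf A=\mu_{1,4}^n$, using the data already assembled: $H^2(\mu_{1,4}^n,\mathbb C)=\langle\nabla_1,\nabla_2,\nabla_3\rangle$, the description of $\Aut(\mu_{1,4}^n)$ through $\phi_{1,4}^n$, and the orbit sets $T_1(\mu_{1,4}^n)$, $T_2(\mu_{1,4}^n)$ together with the single $3$-dimensional subspace $\langle\nabla_1,\nabla_2,\nabla_3\rangle$. By the lemma which matches isomorphism classes of non-split $s$-dimensional central extensions of an algebra with the $\Aut$-orbits on $T_s$, the theorem reduces to three tasks: (i) write down, for each orbit representative, the algebra $(\mu_{1,4}^n)_\theta$; (ii) recognise each of these as one of the $\mu_{2,k}$, $\mu_{3,k}$, $\mu_{4,k}$; (iii) check the resulting list has no repetitions.

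For (i) I would fix $\mathbb V=\langle e_{n+1},\dots,e_{n+s}\rangle$ and, for a representative $\langle\theta_1,\dots,\theta_s\rangle$ with each $\theta_j$ a combination of $\nabla_1,\nabla_2,\nabla_3$ lifted to the cocycles $\Delta_{1,n},\Delta_{n,1},\Delta_{n,n}$, read off the multiplication of $(\mu_{1,4}^n)_\theta=\mu_{1,4}^n\oplus\mathbb V$ under $[x+x',y+y']=xy+\theta(x,y)$: it is the table of $\mu_{1,4}^n$ together with $e_1e_n=e_{n-1}+\sum_j a_j e_{n+j}$, $e_ne_1=\sum_j b_j e_{n+j}$, $e_ne_n=e_{n-1}+\sum_j c_j e_{n+j}$, where $(a_j,b_j,c_j)$ is the coordinate vector of $\theta_j$. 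This gives one table per orbit type: three for $T_1$, four for $T_2$, one for the $3$-dimensional case.

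Task (ii) is the core of the proof. For each table I would exhibit an explicit linear change of basis bringing it to the normal form of one of the algebras $\mu_{2,k}$, $\mu_{3,k}$, $\mu_{4,k}$ (the ones not among the previously known naturally graded (quasi-)filiform algebras being defined in the appendix). The transformations that do the work are a dilation $e_1\mapsto\lambda e_1$ propagated as $e_i\mapsto\lambda^i e_i$, a shift $e_n\mapsto e_n+\mu e_{n-2}$ of the kind appearing in $\phi_{1,4}^n$, and corrections of $e_{n-1}$ and of the new central generators $e_{n+j}$ by linear combinations of the $e_{n+j}$; a rescaling of the $e_{n+j}$ themselves pins down the remaining coefficients. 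Running this over all orbit types produces the three displayed lists, and the parametrised entries $\mu_{2,9}^{n+1}(\alpha)_{\alpha\neq1}$ and $\mu_{3,3}^{n+2}(\alpha)_{\alpha\neq1}$ come from the parametrised orbit families in $T_1$ and $T_2$, the value $\alpha=1$ being dropped because for that value the normal form collapses onto one of the non-parametrised algebras in the same list. I expect this step to be the only genuinely laborious one: guessing the correct normalising map in each case, and — above all — tracking how the orbit scalar is carried to the parameter of $\mu_{2,9}$ resp.\ $\mu_{3,3}$ and identifying the degenerate value.

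Task (iii) is then immediate. Two extensions in $E(\mu_{1,4}^n,\mathbb V)$ are isomorphic exactly when they lie in the same $\Aut(\mu_{1,4}^n)$-orbit; the orbits inside each $T_s(\mu_{1,4}^n)$ were already checked to be pairwise distinct, and the identification in (ii) is orbit-by-orbit, so the algebras in the statement are pairwise non-isomorphic. As a cross-check one may instead separate them directly by the invariants $\dim\mathbf A^i$ and $\dim\Ann(\mathbf A)$ of a candidate algebra $\mathbf A$ together with the product induced on $\mathbf A/\mathbf A^3$, which already distinguish these $\mu$'s.
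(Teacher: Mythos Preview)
Your proposal is correct and follows exactly the paper's approach: the paper computes the $\Aut(\mu_{1,4}^n)$-orbits on $T_s(\mu_{1,4}^n)$ for $s=1,2,3$ in the preceding subsections and then simply states the theorem, leaving the identification of each orbit representative with the appropriate $\mu_{k,j}$ algebra implicit. Your steps (i)--(iii) are precisely the natural elaboration of that omitted identification, and your appeal to the orbit/isomorphism-class bijection for pairwise non-isomorphism is the same as the paper's.
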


\section*{Appendix: The list of main algebras}

\[\begin{array}{lllll}
\mu_{2,5}^n: & e_ie_j=e_{i+j},& e_1e_{n-1}=e_{n-2}+e_n, & e_{n-1}e_1=e_n,  \\
\mu_{2,6}^n: & e_ie_j=e_{i+j},& e_1e_{n-1}=e_{n-2}, & e_{n-1}e_1=e_n, & e_{n-1}e_{n-1}=e_n, \\
\mu_{2,7}^n: & e_ie_j=e_{i+j},& e_1e_{n-1}=e_{n-2}, & e_{n-1}e_{n-1}=e_n, \\
\mu_{2,8}^n: & e_ie_j=e_{i+j},& e_1e_{n-1}=e_{n-2}+e_n, &e_{n-1}e_1=e_n, & e_{n-1}e_{n-1}=e_{n-2}, \\
\mu_{2,9}^n(\alpha): & e_ie_j=e_{i+j}, & e_1e_{n-1}=e_n, & e_{n-1}e_1=\alpha e_n, & e_{n-1}e_{n-1}=e_{n-2}, \\
\mu_{2,10}^n: & e_ie_j=e_{i+j},& e_{n-1}e_1=e_n, & e_{n-1}e_{n-1}=e_{n-2}, \\
\multicolumn{5}{l}{\text{where $2\leq i+j\leq n-2$ \ and  \ $\alpha\in\mathbb{C}$.}}\\ \\

\mu_{3,1}^n: & e_ie_j=e_{i+j},& e_1e_{n-2}=e_{n-1}, & e_{n-2}e_1=e_n, \\
\mu_{3,2}^n: & e_ie_j=e_{i+j},& e_1e_{n-2}=e_{n-1}, & e_{n-2}e_1=e_{n-1}+e_n, & e_{n-2}e_{n-2}=e_n  \\
\mu_{3,3}^n(\alpha): & e_ie_j=e_{i+j},& e_1e_{n-2}=e_{n-1}, &e_{n-2}e_1=\alpha e_{n-1}, & e_{n-2}e_{n-2}=e_n, \\
\mu_{3,4}^n: & e_ie_j=e_{i+j},&  e_{n-2}e_1=e_{n-1}, & e_{n-2}e_{n-2}=e_n\\
\mu_{3,5}^n: & e_ie_j=e_{i+j},&  e_1e_{n-2}=e_{n-3}+e_{n-1}, & e_{n-2}e_1=e_{n-1}, & e_{n-2}e_{n-2}=e_n,  \\
\mu_{3,6}^n: & e_ie_j=e_{i+j},&  e_1e_{n-2}=e_{n-3}+e_{n-1}, & e_{n-2}e_1=e_{n-1}+e_n, & e_{n-2}e_{n-2}=e_n,  \\
\mu_{3,7}^n: & e_ie_j=e_{i+j},&  e_1e_{n-2}=e_{n-1}, & e_{n-2}e_1=e_n, & e_{n-2}e_{n-2}=e_{n-3},\\
\multicolumn{5}{l}{\text{where $2\leq i+j\leq n-3$ \ and  \ $\alpha\in\mathbb{C}$.}}\\ \\

\mu_{4,1}^n: & e_ie_j=e_{i+j},& e_1e_{n-3}=e_{n-2}, & e_{n-3}e_1=e_{n-1}, & e_{n-3}e_{n-3}=e_n, \\
\mu_{4,2}^n: & e_ie_j=e_{i+j},& e_1e_{n-3}=e_{n-2}, & e_{n-3}e_1=e_{n-1}, & e_{n-3}e_{n-3}=e_{n-4}+e_n, \\
\mu_{4,3}^n: & e_ie_j=e_{i+j},& e_1e_{n-3}=e_{n-4}+e_{n-2}, & e_{n-3}e_1=e_{n-1}, & e_{n-3}e_{n-3}=e_n, \\
\mu_{4,4}^n: & e_ie_j=e_{i+j},& e_1e_{n-3}=e_{n-4}+e_{n-2}, & e_{n-3}e_1=e_{n-1}, & e_{n-3}e_{n-3}=e_{n-4}+e_n,\\
\multicolumn{5}{l}{\text{where $2\leq i+j\leq n-4$ \ and  \ $\alpha\in\mathbb{C}$.}}
\end{array}
\]


\end{document}